\newcommand{\R}{\mathbb{R}}
\newcommand{\dd}{\mathrm{d}}
\newcommand{\T}{\mathsf{T}}
\theoremstyle{plain}
\newtheorem{theorem}{Theorem}[section]
\theoremstyle{definition}
\newtheorem{definition}[theorem]{Definition}
\theoremstyle{remark}
\newtheorem{remark}[theorem]{Remark}
\title{The Dynamical Anatomy of Anderson Acceleration:\\
From Adaptive Momentum to Variable-Mass ODEs\thanks{This research was funded by the National Natural Science Foundation of China (Grant No. 12001287).}}
\author[1]{Kewang Chen\thanks{Corresponding author: \texttt{kwchen@nuist.edu.cn}}}
\author[1]{Yongqiu Jiang}
\author[2]{Cornelis Vuik}
\affil[1]{College of Mathematics and Statistics, Nanjing University of Information Science and Technology, Nanjing, 210044, China; and Jiangsu International Joint Laboratory on System Modeling and Data Analysis.}
\affil[2]{Delft Institute of Applied Mathematics, Delft University of Technology, Delft, 2628CD, the Netherlands.}
\date{\today}
\begin{document}

\maketitle

\begin{abstract}
This paper provides a rigorous derivation and analysis of accelerated optimization algorithms through the lens of High-Resolution Ordinary Differential Equations (ODEs). While classical Nesterov acceleration is well-understood via asymptotic vanishing damping, the dynamics of Anderson Acceleration (AA) remain less transparent. This work makes significant theoretical contributions to AA by bridging discrete acceleration algorithms with continuous dynamical systems, while also providing practical algorithmic innovations. Our work addresses fundamental questions about the physical nature of Anderson Acceleration that have remained unanswered since its introduction in 1965. Firstly, we prove that AA can be exactly rewritten as an adaptive momentum method and, in the high-resolution limit, converges to a second-order ODE with \textbf{Variable Effective Mass}. Through a Lyapunov energy analysis, we reveal the specific instability mechanism of standard AA: unchecked growth in effective mass acts as ``negative damping,'' physically injecting energy into the system and violating dissipation constraints. Conversely, high-resolution analysis identifies an implicit Hessian-driven damping term that provides stabilization in stiff regimes. Leveraging these dynamical insights, we then propose \textit{Energy-Guarded Anderson Acceleration} (EG-AA), an algorithm that acts as an inertial governor to enforce thermodynamic consistency. Moreover, our convergence analysis, formulated via the \textbf{Acceleration Gain Factor}, proves that EG-AA improves upon gradient descent by maximizing the geometric contraction of the linear subspace projection while actively suppressing nonlinear approximation errors. Theoretical bounds confirm that EG-AA is no worse than standard AA, and numerical experiments demonstrate strictly improved convergence stability and rates in ill-conditioned convex composite problems compared to standard Anderson mixing.
\end{abstract}

\vspace{0.5cm}
\noindent \textbf{Keywords:} Anderson acceleration, High-resolution ODEs, Variable mass, Optimization, Nesterov acceleration

\vspace{0.2cm}
\noindent \textbf{AMS Subject Classification:} 65K05, 90C25, 34E10

\section{Introduction}
The optimization of objective functions is a fundamental challenge in computational mathematics. We consider the unconstrained minimization problem 
\begin{equation}
\min_{x \in \R^d} f(x).
\end{equation}
We define the fixed-point iteration based on Gradient Descent (GD) with step size 
$h$: 
\begin{equation}x_{k+1} =x_k - h \nabla f(x_k)=: g(x_k). \end{equation}
The residual is defined as: 
\begin{equation}
r(x) = g(x)-x=-h\nabla f(x). 
\end{equation}
While Anderson Acceleration (AA) is broadly applicable to fixed-point iterations, this work focuses specifically on its application to the gradient descent operator to leverage the potential structure of $f(x)$.

\subsection{Historical Context: From Gradient Descent to Acceleration}
The classic \textbf{Gradient Descent (GD)} method, $x^{k+1} = x^k - h\nabla f(x^k)$, dates back to Cauchy and is the simplest first-order oracle. Assuming that $\nabla f$ is $L$-Lipschitz continuous and that gradient descent uses a stepsize $h \le 1/L$ (or an appropriately decaying stepsize), the method attains the standard sublinear convergence rate for smooth convex objectives:
\[
f(x^k)-f(x^\ast)=O\!\left(\frac{1}{k}\right).
\]

In the 1980s, Nesterov \cite{Nesterov1983} introduced the \textbf{Accelerated Gradient (NAG)} method
which utilizes an inertial ``momentum'' term 
\[
    y^{k} = x^{k} + \beta_k (x^{k}-x^{k-1}),
\]
followed by a gradient step
\[
    x^{k+1} = y^{k} - h \nabla f(y^{k}),
\]
with the classical Nesterov choice 
\[
    \beta_k = \frac{t_{k-1}-1}{t_k}, 
    \qquad 
    t_{k+1} = \frac{1+\sqrt{1+4t_k^2}}{2}, 
    \quad t_0 = 1,
\]
which yields the accelerated convergence rate
\[
    f(x^{k}) - f(x^\ast) = O\!\left( \frac{1}{k^{2}} \right).
\]
This rate is proven to be optimal for the class of first-order smooth convex minimization algorithms.

Despite its optimality, NAG often exhibits significant transient oscillations (``ripples") in the function value and trajectory, particularly for ill-conditioned problems. These oscillations can be undesirable in applications requiring smooth trajectories or early stopping. Furthermore, the algebraic construction of Nesterov's momentum parameter $\beta_k$ was historically viewed as unintuitive.

\subsection{The Rise of ODE-Based Analysis and the Discretization Gap}
To demystify acceleration, researchers have turned to continuous-time limits. Su, Boyd, and Candès \cite{Su2016} famously showed that the continuous limit of NAG (as step size $h \to 0$) is a second-order Ordinary Differential Equation (ODE) with \textit{Asymptotic Vanishing Damping} (AVD). They Consider Nesterov's Accelerated Gradient method:
\begin{equation} \label{eq:nesterov_system}
    \begin{cases}
        x^{k+1} = y^k - h \nabla f(y^k), \\
        y^{k} = x^{k} + \frac{k-1}{k+2} (x^{k} - x^{k-1}).
    \end{cases}
\end{equation}
Eliminating the intermediate variable $y^k$, we obtain the equivalent single-variable update:
\begin{equation} \label{eq:nesterov_inertial}
    x^{k+1} - x^k = \frac{k-1}{k+2}(x^k - x^{k-1}) - h \nabla f(y^k).
\end{equation}
Let $t = k\sqrt{h}$ denote the continuous time variable, where $h$ is the step size and $k$ is the discrete iteration count. We assume there exists a smooth curve $x(t)$ such that as $h \to 0$ (and thus $t \to \infty$ effectively in the limit analysis), $x^k = x(k\sqrt{h}) = x(t)$. Applying the Taylor expansion yields:
\begin{align}
    x^{k+1} &= x((k+1)\sqrt{h}) = x(t + \sqrt{h}) = x(t) + \sqrt{h}\dot{x}(t) + \frac{h}{2}\ddot{x}(t) + o(h), \label{eq:taylor_next} \\
    x^{k-1} &= x((k-1)\sqrt{h}) = x(t - \sqrt{h}) = x(t) - \sqrt{h}\dot{x}(t) + \frac{h}{2}\ddot{x}(t) + o(h). \label{eq:taylor_prev}
\end{align}
Substituting these expansions into \eqref{eq:nesterov_inertial} and reorganizing terms, they derive the following limit equation (see Appendix A):
\begin{equation} \label{eq:intro_avd}
\ddot{x}(t) + \frac{3}{t}\dot{x}(t) + \nabla f(x(t)) = 0.
\end{equation}
This is the second-order Ordinary Differential Equation (ODE) corresponding to Nesterov's accelerated gradient method, often referred to as the Asymptotic Vanishing Damping (AVD) system. It captures the essence of acceleration: the friction term $\frac{3}{t}$ decays over time, allowing the system to maintain momentum initially and traverse flat regions of the landscape quickly. It has been proven that every trajectory generated by the AVD system satisfies the asymptotic convergence rate in function value:
\begin{equation}
    f(x(t)) - f^* = \mathcal{O}\left(\frac{1}{t^2}\right).
\end{equation}

However, recent advancements have identified a critical disconnect between continuous theory and discrete implementation. As highlighted by Xie et al. \cite{Xie2025}, the rapid convergence observed in continuous-time models (potentially faster than any inverse polynomial) often does not directly translate to discrete iterative methods. Naive explicit discretization of fast-converging ODEs can lead to divergence, particularly for problems with varying curvature. This necessitates a careful design of stable discretization schemes that can preserve the acceleration properties without succumbing to numerical instability.

\subsection{Hessian-Driven Damping (ISHD)}
To improve stability, recent work has focused on High-resolution ODEs and inertial systems with Hessian-driven damping (ISHD). Following the framework of Attouch et al.~\cite{Attouch2020} and its generalization by Xie et al.~\cite{Xie2025}, the ISHD system is given by:
\begin{equation} \label{eq:intro_din}
\ddot{x}(t) + \frac{\alpha}{t}\dot{x}(t) + \beta(t) \nabla^2 f(x(t))\dot{x}(t) + \gamma(t)\nabla f(x(t)) = 0.
\end{equation}
The term $\beta(t) \nabla^2 f(x(t))\dot{x}(t)$ represents geometric damping derived from the Hessian. Crucially, this term allows for the control of oscillations without affecting the asymptotic convergence rate. Since $\nabla^2 f(x)\dot{x} = \frac{d}{dt} \nabla f(x)$, this damping can be implemented in discrete algorithms using first-order differences $(\nabla f(x^k) - \nabla f(x^{k-1}))$, avoiding expensive Hessian computations. Xie et al. recently demonstrated that with appropriate stability conditions on $\beta(t)$ and $\gamma(t)$, explicit discretization schemes (such as their EIGAC algorithm) can indeed preserve the $\mathcal{O}(1/k^2)$ convergence rate while ensuring robustness \cite{Xie2025}.

\subsection{From ISHD to Anderson Acceleration}
The ISHD framework provides a unified view, but selecting the optimal coefficients $\alpha, \beta(t), \gamma(t)$ for a given problem remains a non-trivial challenge. The ``Learning to Optimize" (L2O) paradigm \cite{Xie2025} addresses this by training these coefficients over a distribution of functions to minimize a stopping time metric.

AA can be viewed as the \textit{online, adaptive} counterpart to this learning process. Instead of learning coefficients offline from a training set, AA adapts its mixing coefficients at every iteration based on the local residual history. While classical Nesterov acceleration is well-understood via asymptotic vanishing damping, the dynamics of Anderson Acceleration (AA) remain less transparent.

\textbf{The central questions of this paper are:} What is the precise continuous-time physical model that describes Anderson Acceleration within the \textbf{High-Resolution ODE} framework? How does the energy dissipation of this system govern its convergence, and can we leverage these physical insights to construct an acceleration method that is robustly superior to standard AA?

To answer these questions, we proceed in four rigorous stages:
\begin{enumerate}
    \item \textbf{Discrete Algebraic Transformation:} We systematically rewrite the AA update as an adaptive multi-step momentum method. This fundamental step converts the opaque residual minimization coefficients ($\theta$) into interpretable inertial momentum coefficients ($\gamma$), establishing the structural link to classical acceleration.
    
    \item \textbf{Continuous Limit Analysis:} We apply Taylor expansions under the inertial scaling regime to derive the specific ODEs governing the system. We prove that AA converges to a second-order ODE with \textbf{Variable Effective Mass}, where the mass is dynamically determined by the second moment of the mixing coefficients.
    
    \item \textbf{Energy Dissipation and Stability Analysis:} We derive the Lyapunov energy decay rate for the AA($m$) ODE limit. This analysis reveals a profound physical insight invisible in standard fixed-mass momentum methods: instability in AA arises when the algorithm increases effective mass too rapidly, effectively injecting energy into the system and violating passivity constraints.
    
    \item \textbf{A Novel Energy-Guarded Algorithm:} Finally, leveraging these dynamical insights, we propose \textit{Energy-Guarded Anderson Acceleration} (EG-AA). This algorithm acts as a physical governor, explicitly controlling the growth of effective mass to ensure continuous energy dissipation. We demonstrate that EG-AA achieves strictly improved convergence stability and rates in ill-conditioned convex composite problems compared to standard Anderson mixing.
\end{enumerate}

\section{Preliminaries}

\subsection{Anderson Acceleration}
Anderson acceleration (AA), originally introduced by Anderson in 1965 to accelerate the convergence of nonlinear fixed-point iterations \cite{anderson1965iterative, anderson2019comments}, has become a standard tool in scientific computing.
It is widely utilized in computational chemistry as Pulay mixing and in electronic structure calculations as Anderson mixing.
In contrast to the standard Picard iteration, which relies solely on the most recent iterate, Anderson acceleration of depth $m$, denoted as $AA(m)$, constructs the next update using a linear combination of the $m$ previous iterates.
This combination is determined by minimizing the linearized residual of the fixed-point map in a least-squares sense.
Theoretical analysis has established that AA is essentially equivalent to the nonlinear GMRES method \cite{carlson1998design, oosterlee2000krylov, walker2011anderson} and is closely related to direct inversion in the iterative subspace (DIIS) methods \cite{pulay1980convergence}.
Furthermore, it shares structural similarities with quasi-Newton update schemes \cite{fang2009two, haelterman2010similarities}.
Recent research has focused on extending the classical algorithm to non-stationary variants.
Evans et al. \cite{evans2020proof} introduced heuristics for dynamically selecting damping factors based on iteration gain, while Pollock and Rebholz \cite{pollock2021anderson} proposed strategies for adaptively varying the window size.
Inspired by nested Krylov solvers such as GMRESR \cite{vuik1993solution, van1994gmresr}, Chen and Vuik \cite{chen2022composite} developed a composite Anderson acceleration method that integrates two distinct window sizes to enhance robustness.

While the empirical effectiveness of AA has been recognized for decades, rigorous convergence theory has only been developed relatively recently.
Toth and Kelley \cite{toth2015convergence} provided the first proof of local $r$-linear convergence for stationary, undamped AA applied to contractive maps.
This result was subsequently extended by Evans et al. \cite{evans2020proof} to include damping factors.
In the context of partial differential equations, Pollock et al. \cite{pollock2019anderson} demonstrated that AA improves the convergence rate of Picard iterations for the steady incompressible Navier-Stokes equations.
De Sterck and He \cite{sterck2021asymptotic} generalized these findings to broader fixed-point iterations $\mathbf{x}=\mathbf{g}(\mathbf{x})$, linking convergence behavior to the spectrum of the Jacobian $\mathbf{g}'(\mathbf{x})$ at the fixed point.
Additionally, Wang et al. \cite{wang2021asymptotic} analyzed the asymptotic linear convergence of AA when applied to the Alternating Direction Method of Multipliers (ADMM).
Despite these advancements, establishing global convergence guarantees and sharpening local convergence rates remain active areas of research.
For further details on AA and its diverse applications, we refer readers to \cite{brune2015composing, toth2017local, peng2018anderson, zhang2019accelerating, zhang2020globally, bian2021anderson, chen2022non,khatiwala2023fast} and the references therein.

\subsection{Type-II Anderson Acceleration}

In this work we focus on the Type-II variant of Anderson Acceleration (AA). 
Let $g$ denote a fixed-point map of Gradient Descent iteration as in previous section and write the residual at iterate $x_k$
\[
r_k = g(x_k)-x_k=-h\nabla f(x_k).
\]

\paragraph{Difference matrices.}
At iteration $k$ with memory depth $m_k=\min(m,k)$, Type-II AA forms the difference vectors
\[
\Delta x_j := x_{j+1}-x_j,\qquad \Delta r_j := r_{j+1}-r_j,
\]
and collects them in the matrices
\[
X_k := [\,\Delta x_{k-m_k},\dots,\Delta x_{k-1}\,]\in\R^{d\times m_k},\qquad
R_k := [\,\Delta r_{k-m_k},\dots,\Delta r_{k-1}\,]\in\R^{d\times m_k}.
\]

\paragraph{Type-II least-squares characterization.}
The coefficients $\theta^{(k)}\in\R^{m_k}$ are obtained by solving the (typically overdetermined) least-squares problem
\begin{equation}\label{eq:ls-problem-typedetail}
    \theta^{(k)}_{\mathrm{II}}
    = \arg\min_{\theta\in\R^{m_k}}
        \big\| r_k - R_k \theta \big\|_2^2,
\end{equation}
which minimizes the linearized residual prediction error.  
The associated normal equations are
\[
R_k^\top R_k\,\theta^{(k)}_{\mathrm{II}} = R_k^\top r_k .
\]
When $R_k$ is rank-deficient or ill-conditioned, one commonly replaces \eqref{eq:ls-problem-typedetail} by the Tikhonov-regularized problem
\[
\theta^{(k)}_{\lambda}
    = \arg\min_{\theta\in\R^{m_k}}
      \big\| r_k - R_k \theta\big\|_2^2 + \lambda\|\theta\|_2^2,
\]
or solves \eqref{eq:ls-problem-typedetail} using a QR/SVD-based minimum-norm solver;  
the parameter $\lambda\ge0$ provides additional damping and improves numerical stability.

Given $\theta^{(k)}_{\mathrm{II}}$, the Type-II AA update without damping is
\begin{equation}\label{eq:aa-update-typedetail}
    x_{k+1}^{\mathrm{II}}
    = x_k + r_k - (X_k + R_k)\,\theta^{(k)}_{\mathrm{II}} .
\end{equation}
Equivalently, the update may be expressed as an affine combination of past iterates; see  
\cite{walker2011anderson, fang2009two} for equivalent algebraic formulations.

\paragraph{Comparison with Type-I AA.}
For completeness, we recall that the Type-I variant instead solves the least-squares system
\begin{equation}\label{eq:typeI-def}
    \theta^{(k)}_{\mathrm{I}}
    = \arg\min_{\theta\in\R^{m_k}}
        \big\| \Delta x_k - X_k \theta \big\|_2^2,
\end{equation}
and performs the update
\begin{equation}\label{eq:typeI-update}
    x_{k+1}^{\mathrm{I}}
    = x_k + r_k - X_k\,\theta^{(k)}_{\mathrm{I}} .
\end{equation}
Thus, Type-II minimizes the discrepancy in the residual space, whereas Type-I minimizes the predicted
iterate increment.  
When the Jacobian $g'(x)$ is constant, the identity
\[
R_k = g'(x^\ast)\, X_k
\]
holds, implying that Type-I and Type-II are algebraically equivalent up to a fixed left multiplication.  
In general nonlinear settings, however, the two variants may differ substantially in conditioning and 
numerical stability.

\paragraph{Remarks on implementation and theory.}
(i) The least-squares system can be ill-conditioned when the columns of $R_k$ are nearly linearly dependent; thus regularization, damping, or truncating the window size $m_k$ are standard remedies.  
(ii) The computational cost is dominated by forming $R_k^\top R_k$ (or its QR factorization) and solving a small $m_k\times m_k$ system; in practice one typically chooses $m_k\le 10$ to balance overhead and acceleration.  
(iii) Many local convergence results for AA rely on linearization of $g$ about the fixed point or contractivity assumptions; such assumptions will be explicitly stated when invoked in the subsequent analysis.

\section{Momentum Transformation: From Discrete Anderson Mixing to Adaptive Momentum}

In this section, we rigorously transform the Anderson Acceleration (AA) update rule into a multi-step momentum form. This algebraic reformulation is not merely a change of variables; it is essential for revealing the underlying inertial structure required for the subsequent continuous-time ODE analysis.

\begin{figure}[t]
    \centering
    \begin{tikzpicture}[scale=1.1, >=Stealth]
        \node (yk) at (0,0) [circle, fill=blue!15, draw, inner sep=2pt] {\footnotesize $y_k$};
        \node (yk1) at (3,1) [circle, fill=blue!15, draw, inner sep=2pt] {\footnotesize $y_{k+1}$};
        \node (yk2) at (5,2.5) [circle, fill=blue!15, draw, inner sep=2pt] {\footnotesize $y_{k+2}$};
        
        \draw[->, thick, red!80!black] (yk) -- (yk1) node[midway, below right] {$s_1$};
        \draw[->, thick, red!80!black] (yk1) -- (yk2) node[midway, above left] {$s_0$};
        
        \draw[->, dashed, blue!80!black] (yk) -- (yk2) node[midway, above, sloped] {\footnotesize $y_{k+2} - y_k$};
        
        \node[text width=4cm, align=center, fill=white, inner sep=2pt, draw=gray!20] at (1.5, 2.5) {\small \textbf{Basis Transformation}\\$\{s_j\} \leftrightarrow \{y_{t} - y_{t-j}\}$};
    \end{tikzpicture}
    \caption{Visualization of the basis transformation. AA is naturally defined using sequential difference steps $s_j$ (solid red). The momentum form requires rewriting these in terms of lags from the current position (dashed blue), which correspond to velocity and acceleration terms in the continuous limit.}
    \label{fig:basis_transform}
\end{figure}
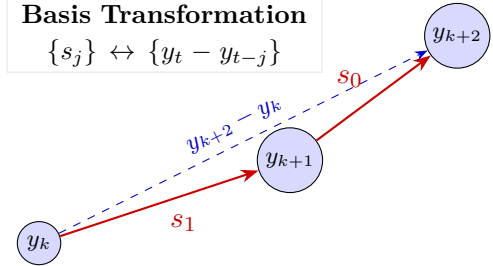

A crucial component of our transformation is the auxiliary variable $y_{k+1}$, defined as the standard gradient descent step from $x_k$:
\begin{equation} \label{eq:y_definition}
    y_{k+1} := x_k + r(x_k) = x_k - h \nabla f(x_k).
\end{equation}
Note that in our analysis, the step size $h > 0$ is absorbed into the definition of the residual $r(x) = -h\nabla f(x)$.

Using this definition, we can derive a fundamental identity relating the columns of the matrix $X_k + R_k$ in the AA update to the sequence $\{y_k\}$. For the $j$-th column corresponding to iteration $k-j$ (with $j = 1, \dots, m_k$ where $m_k = \min(m, k)$), we have:
\begin{align}
    (X_k + R_k)_{:,j} &= \Delta x_{k-m_k+j-1} + \Delta r_{k-m_k+j-1} \nonumber \\
    &= (x_{k-m_k+j} - x_{k-m_k+j-1}) + (r(x_{k-m_k+j}) - r(x_{k-m_k+j-1})) \nonumber \\
    &= (x_{k-m_k+j} + r(x_{k-m_k+j})) - (x_{k-m_k+j-1} + r(x_{k-m_k+j-1})) \nonumber \\
    &= y_{k-m_k+j+1} - y_{k-m_k+j}.
    \label{eq:column_identity}
\end{align}

For notational convenience, we define the sequential gradient steps as:
\begin{equation}
    s_j := y_{k-j+2} - y_{k-j+1} = \Delta y_{k-j+1}, \quad j = 1, 2, \dots, m.
    \label{eq:sj_definition}
\end{equation}
Note that $s_1 = y_{k+1} - y_k$, $s_2 = y_k - y_{k-1}$, and in general $s_j = y_{k-j+2} - y_{k-j+1}$. 
When considering a memory depth $m$, the AA update involves the terms $s_{m}, s_{m-1}, \dots, s_1$. Consequently, the linear combination term in the AA update can be expressed compactly as:
\begin{equation}
    \sum_{j=1}^m s_{m-j+1} \theta_j^{(k)} = \sum_{j=1}^m s_j \theta_{m-j+1}^{(k)}.
    \label{eq:linear_combination}
\end{equation}

With this foundation, we can now derive the momentum form for varying depths $m$. Consider first the case of depth $m=1$. The update involves only the most recent difference $s_1 = y_{k+1} - y_k$, leading to:
\begin{align}
    x_{k+1} &= (x_k + r(x_k)) - (\Delta x_{k-1} + \Delta r_{k-1}) \theta^{(k)} \nonumber \\
    &= y_{k+1} - s_1 \theta^{(k)}.
    \label{eq:m1_update}
\end{align}
By defining the momentum coefficient $\gamma_k := -\theta^{(k)}$, where $\theta^{(k)}$ is a scalar for $m=1$, the update transforms into the recognizable Adaptive Heavy-Ball form:
\begin{equation}
     x_{k+1} = y_{k+1} + \gamma_k (y_{k+1} - y_k). 
     \label{eq:m1_momentum}
\end{equation}
Here, the coefficient $\gamma_k$ is adaptive, calculated as $\gamma_k = -\frac{\langle \Delta r_{k-1}, r_k \rangle}{\|\Delta r_{k-1}\|^2}$ to minimize the residual.

Next, we extend this logic to depth $m=2$. The AA update linear combination becomes:
\[
\text{Correction} = - \theta_1^{(k)} s_2 - \theta_2^{(k)} s_1.
\]
We seek to rewrite this in a two-step momentum basis:
\begin{equation} \label{eq:momentum_basis_2}
    \text{Momentum form: } \gamma^{(1)}(y_{k+1} - y_k) + \gamma^{(2)}(y_{k+1} - y_{k-1}).
\end{equation}
Expressing the basis terms via $s_j$, we have $y_{k+1} - y_k = s_1$ and $y_{k+1} - y_{k-1} = s_1 + s_2$. 
Equating the coefficients of $s_1$ and $s_2$ in the AA update and the momentum form yields the linear system:
\begin{align*}
    s_1: & \quad -\theta_2^{(k)} = \gamma^{(1)} + \gamma^{(2)}, \\
    s_2: & \quad -\theta_1^{(k)} = \gamma^{(2)}.
\end{align*}
Solving this system provides the mapping: $\gamma^{(2)} = -\theta_1^{(k)}$ and $\gamma^{(1)} = \theta_1^{(k)} - \theta_2^{(k)}$.

Proceeding similarly for $m=3$, the update term $- (\theta_1^{(k)} s_3 + \theta_2^{(k)} s_2 + \theta_3^{(k)} s_1)$ is matched against the three-step momentum basis $\sum_{j=1}^3 \gamma^{(j)} (y_{k+1} - y_{k-j+1})$. 
By expanding the lag terms, we obtain the system of equations:
\begin{align*}
    s_3: & \quad \gamma^{(3)} = -\theta_1^{(k)}, \\
    s_2: & \quad \gamma^{(2)} + \gamma^{(3)} = -\theta_2^{(k)}, \\
    s_1: & \quad \gamma^{(1)} + \gamma^{(2)} + \gamma^{(3)} = -\theta_3^{(k)}.
\end{align*}
The solution reveals a consistent recursive pattern: 
\begin{align*}
    \gamma^{(3)} &= -\theta_1^{(k)}, \\
    \gamma^{(2)} &= \theta_1^{(k)} - \theta_2^{(k)}, \\
    \gamma^{(1)} &= \theta_2^{(k)} - \theta_3^{(k)}.
\end{align*}

Generalizing this pattern by induction leads to our main transformation theorem.

\begin{theorem}[AA-Momentum Equivalence] \label{thm:aa_momentum}
The Type-II AA($m$) update is algebraically equivalent to the adaptive multistep momentum method:
\begin{equation}
    x_{k+1} = y_{k+1} + \sum_{j=1}^m \gamma_k^{(j)} (y_{k+1} - y_{k-j+1}),
    \label{eq:momentum_form}
\end{equation}
where $\theta^{(k)} = (\theta_1^{(k)}, \theta_2^{(k)}, \dots, \theta_m^{(k)})^\top$ are the AA mixing coefficients obtained from solving \eqref{eq:ls-problem-typedetail}, and the momentum coefficients are derived via the difference relation:
\begin{equation}
    \gamma_k^{(j)} = \theta_{m-j}^{(k)} - \theta_{m-j+1}^{(k)}, \quad \text{for } j=1,\dots,m,
    \label{eq:gamma_general}
\end{equation}
with the convention that $\theta_0^{(k)} = 0$ and $\theta_{m+1}^{(k)} = 0$ as boundary conditions for the recurrence.
\end{theorem}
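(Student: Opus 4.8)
The plan is to prove the equivalence directly, as a change of basis in $\R^{m}$ implemented by a summation-by-parts (Abel) identity; the induction sketched above for $m=1,2,3$ yields the same result and could be used instead, but the Abel argument makes the invertibility of the transformation transparent. The starting point is \eqref{eq:column_identity}, which already shows that the $j$-th column of $X_k+R_k$ equals the sequential gradient step $s_{m-j+1}$ from \eqref{eq:sj_definition}; combined with the identity $x_k+r(x_k)=y_{k+1}$ of \eqref{eq:y_definition}, the Type-II update \eqref{eq:aa-update-typedetail} becomes
\[
    x_{k+1} \;=\; y_{k+1} \;-\; \sum_{j=1}^{m} s_{j}\,\theta^{(k)}_{m-j+1},
\]
after the reindexing in \eqref{eq:linear_combination}. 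So the claim reduces to re-expressing the correction, currently written in the ``sequential-step'' basis $\{s_1,\dots,s_m\}$, in the ``momentum'' basis $\{\,y_{k+1}-y_{k-j+1}\,\}_{j=1}^{m}$.

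The bridge between the two bases is the elementary telescoping identity
\[
    y_{k+1}-y_{k-j+1} \;=\; \sum_{i=1}^{j} s_i \;=:\; L_j, \qquad j=1,\dots,m, \qquad L_0:=0,
\]
which follows at once from $s_i=y_{k-i+2}-y_{k-i+1}$. Since $L_j-L_{j-1}=s_j$, the map $\{s_j\}\mapsto\{L_j\}$ is unit lower-triangular, hence invertible, so the momentum representation exists and its coefficients are uniquely determined. To read them off, set $a_i:=-\theta^{(k)}_{m-i+1}$ so that $x_{k+1}=y_{k+1}+\sum_{i=1}^{m}a_i s_i$, substitute $s_i=L_i-L_{i-1}$, and regroup by Abel summation:
\[
    \sum_{i=1}^{m} a_i\,(L_i-L_{i-1}) \;=\; \sum_{j=1}^{m} (a_j-a_{j+1})\,L_j, \qquad a_{m+1}:=0 .
\]
Hence $\gamma_k^{(j)} = a_j-a_{j+1} = -\theta^{(k)}_{m-j+1}+\theta^{(k)}_{m-j} = \theta^{(k)}_{m-j}-\theta^{(k)}_{m-j+1}$, which is exactly \eqref{eq:gamma_general}; the convention $\theta^{(k)}_0=0$ is precisely the statement $a_{m+1}=0$ (equivalently, the endpoint term $\gamma_k^{(m)}=-\theta^{(k)}_1$), while $\theta^{(k)}_{m+1}=0$ is the harmless symmetric convention that does not enter any nontrivial term. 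Matching this against \eqref{eq:momentum_form} completes the identification.

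It remains to handle the startup iterations, where the effective depth is $m_k=\min(m,k)<m$: the entire computation goes through verbatim with every occurrence of $m$ replaced by $m_k$, so \eqref{eq:momentum_form}--\eqref{eq:gamma_general} hold as stated with $m$ replaced by $m_k$ and the same boundary conventions. The only step requiring genuine care — the ``obstacle,'' such as it is — is purely bookkeeping: keeping the column ordering of $X_k+R_k$ (oldest difference first) consistent with the lag ordering of $s_1,\dots,s_m$ (most recent first) and with the entries of $\theta^{(k)}$. Once that alignment is fixed by \eqref{eq:column_identity}, there is no analytic difficulty left: the theorem is one line of Abel summation.
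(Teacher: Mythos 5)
Your proof is correct, and its core is the same telescoping identity $y_{k+1}-y_{k-j+1}=\sum_{i\le j}s_i$ on which the paper's argument rests, but the packaging is genuinely different. The paper \emph{verifies} the formula: it posits the momentum ansatz, equates coefficients of $s_1,\dots,s_m$ to obtain the triangular system $\sum_{j\ge i}\gamma^{(j)}=-\theta^{(k)}_{m-i+1}$, and solves it by back-substitution (presented as an induction on $m$ after the explicit checks for $m=1,2,3$, although the inductive hypothesis is never really invoked). You instead \emph{derive} the coefficients in one forward pass: starting from the column identity \eqref{eq:column_identity} you write the AA correction in the $s$-basis and apply Abel summation to re-express it in the lag basis, which yields \eqref{eq:gamma_general} and the boundary conventions directly rather than as the solution of a system. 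Your route also makes the invertibility of the coefficient transformation explicit and covers the startup iterations with $m_k=\min(m,k)<m$, which the paper's proof leaves implicit; both are worthwhile additions. One small caution: your remark that the momentum coefficients are ``uniquely determined'' is a statement about the unit-triangular change of coordinates on coefficient vectors; uniqueness of the representation of the update vector in $\R^d$ would additionally require the lags $y_{k+1}-y_{k-j+1}$ to be linearly independent, which is neither needed for nor asserted by the theorem.
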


\begin{proof}
The proof proceeds by induction on $m$. The base cases $m=1,2,3$ have been verified above. 
Assume the formula holds for memory depth $m-1$. For depth $m$, write the AA correction term as:
\[
-\sum_{i=1}^{m} \theta_i^{(k)} s_{m-i+1}.
\]
The momentum form seeks coefficients $\gamma^{(1)}, \dots, \gamma^{(m)}$ such that:
\[
\sum_{j=1}^{m} \gamma^{(j)} (y_{k+1} - y_{k-j+1}) = \sum_{j=1}^{m} \gamma^{(j)} \sum_{i=1}^{j} s_i.
\]
Equating coefficients for $s_1, \dots, s_m$ yields the triangular system:
\[
\sum_{j=i}^{m} \gamma^{(j)} = -\theta_{m-i+1}^{(k)}, \quad i=1,\dots,m.
\]
Solving this system recursively from $i=m$ down to $i=1$ gives:
\[
\gamma^{(m)} = -\theta_1^{(k)}, \quad \gamma^{(j)} = \theta_{m-j}^{(k)} - \theta_{m-j+1}^{(k)} \text{ for } j=1,\dots,m-1,
\]
which is exactly \eqref{eq:gamma_general} with the boundary conditions.
\end{proof}

This theorem is pivotal for our analysis. It demonstrates that Anderson Acceleration effectively constructs a velocity vector using a weighted history of displacements $y_{k+1} - y_{k-j+1}$, which directly maps to velocity and acceleration terms in the continuous-time limit. 
Moreover, the transformation reveals that AA's mixing coefficients $\theta_i^{(k)}$ encode not just momentum but also higher-order inertial information through their successive differences.

\section{Continuous-Time Analysis: The Limit ODE}

\label{sec:continuous_derivation}

We now derive the corresponding continuous-time limit of Anderson Acceleration using the standard Nesterov scaling as in \cite{Shi2018,Su2016} . Let $h > 0$ be the discretization step size, and define the scaled time as $t_k = k\sqrt{h}$. This scaling is essential for capturing acceleration phenomena in the continuous limit.

For the gradient descent step, we adopt the scaling $\beta = h$, consistent with the Euler discretization of gradient flow. Thus:
\begin{equation}
    y_{k+1} = x_k + r(x_k) = x_k - h\nabla f(x_k).
    \label{eq:y_definition_scaled}
\end{equation}

Recall from Theorem \ref{thm:aa_momentum} that AA($m$) is algebraically equivalent to:
\begin{equation}
    x_{k+1} = y_{k+1} + \sum_{j=1}^m \gamma_k^{(j)} (y_{k+1} - y_{k-j+1}),
    \label{eq:momentum_form_scaled}
\end{equation}
where $\gamma_k^{(j)}$ are the adaptive momentum coefficients.

\subsection{Asymptotic Expansion under Nesterov Scaling}

We assume that as the step size $h \to 0$, the discrete iterates $x_k$ approximate a smooth trajectory $x(t)$ with $x_k \approx x(t_k)$. Under this high-resolution scaling, the iteration count $k$ relates to the continuous time $t$ via the square root of the step size:
\begin{itemize}
    \item \textbf{Time increment:} The discrete step corresponds to a time interval $\Delta t = t_{k+1} - t_k = \sqrt{h}$.
    \item \textbf{Lagged steps:} A delay of $j$ iterations, denoted by $y_{k-j+1}$, corresponds to a time lag of $j\sqrt{h}$ relative to $t_{k+1}$.
\end{itemize}

We now perform Taylor expansions around the current time $t = t_k$. We first analyze the gradient step $y_{k+1}$. Substituting the continuous ansatz $x_k = x(t)$ into the discrete update rule, we obtain:
\begin{align}
    y_{k+1} &= x_k - h\nabla f(x_k) \nonumber \\
            &= x(t) - h\nabla f(x(t)) + \mathcal{O}(h^{3/2}).
    \label{eq:y_expansion}
\end{align}
The inclusion of the error term $\mathcal{O}(h^{3/2})$ is necessary to ensure consistency with the expansion of lagged terms in the subsequent analysis. Specifically, when handling momentum or lagged states (e.g., $y_{k-j+1}$), we perform Taylor expansions on terms such as $x(t - \Delta t)$:
\begin{equation}
    x(t-\Delta t) = x(t) - \Delta t \dot{x}(t) + \frac{1}{2}\Delta t^2 \ddot{x}(t) - \mathcal{O}(\Delta t^3).
\end{equation}
Noting that under our scaling $\Delta t^3 = (\sqrt{h})^3 = h^{3/2}$, it becomes evident that capturing the second-order derivative $\ddot{x}(t)$ requires retaining terms up to $\mathcal{O}(\Delta t^2) = \mathcal{O}(h)$. Consequently, terms of order $\mathcal{O}(\Delta t^3)$ correspond exactly to $\mathcal{O}(h^{3/2})$ and represent the truncation error for the limiting second-order ODE.

For the momentum lag terms $y_{k+1} - y_{k-j+1}$, the time difference is $j\sqrt{h}$. We expand:
\begin{align}
    y_{k+1} - y_{k-j+1} &= [x(t) - h\nabla f(x(t))] - [x(t - j\sqrt{h}) - h\nabla f(x(t - j\sqrt{h}))] \nonumber \\
    &= [x(t) - x(t - j\sqrt{h})] - h[\nabla f(x(t)) - \nabla f(t - j\sqrt{h})] + \mathcal{O}(h^{3/2}).
\end{align}

Using the Taylor expansion $x(t - j\sqrt{h}) = x(t) - j\sqrt{h}\dot{x}(t) + \frac{j^2 h}{2}\ddot{x}(t) + \mathcal{O}(h^{3/2})$, and noting that the gradient difference term is $\mathcal{O}(h \cdot \sqrt{h}) = \mathcal{O}(h^{3/2})$, we obtain:
\begin{align}
    y_{k+1} - y_{k-j+1} &= j\sqrt{h}\dot{x}(t) - \frac{j^2 h}{2}\ddot{x}(t) + \mathcal{O}(h^{3/2}).
    \label{eq:lag_expansion_final}
\end{align}

First, we matching the momentum update. Substitute \eqref{eq:y_expansion} and \eqref{eq:lag_expansion_final} into the momentum form \eqref{eq:momentum_form_scaled}:
\begin{align}
    x_{k+1} &= [x(t) - h\nabla f(x(t))] \nonumber \\
    &\quad + \sum_{j=1}^m \gamma_k^{(j)} \left[ j\sqrt{h}\dot{x}(t) - \frac{j^2 h}{2}\ddot{x}(t) \right] + \mathcal{O}(h^{3/2}).
    \label{eq:full_expansion}
\end{align}

We also expand the LHS $x_{k+1} = x(t+\sqrt{h})$:
\begin{align}
    x(t+\sqrt{h}) &= x(t) + \sqrt{h}\dot{x}(t) + \frac{h}{2}\ddot{x}(t) + \mathcal{O}(h^{3/2}).
    \label{eq:x_forward_expansion}
\end{align}

Equating \eqref{eq:full_expansion} and \eqref{eq:x_forward_expansion}:
\begin{align}
    x(t) + \sqrt{h}\dot{x} + \frac{h}{2}\ddot{x} &= x(t) - h\nabla f + \sum_{j=1}^m \gamma_k^{(j)} \left[ j\sqrt{h}\dot{x} - \frac{j^2 h}{2}\ddot{x} \right] + \mathcal{O}(h^{3/2}).
    \label{eq:equality}
\end{align}

Next, we derive the consistency condition from $\mathcal{O}(\sqrt{h})$ terms. Matching terms of order $\sqrt{h}$ (velocity matching):
\begin{align}
    \sqrt{h}\dot{x}(t) &= \sum_{j=1}^m \gamma_k^{(j)} j\sqrt{h}\dot{x}(t).
\end{align}

Dividing by $\sqrt{h}\dot{x}(t)$, we recover the \textbf{Kinematic Consistency Condition},
\begin{equation}
    \sum_{j=1}^m j\gamma_k^{(j)} = 1 + \mathcal{O}(\sqrt{h}),
    \label{eq:consistency_scaled}
\end{equation}
which is precisely connection to Anderson Acceleration consistency condition in \cite{walker2011anderson} that the AA coefficients must satisfy a
first-order Taylor compatibility condition so that the least-squares correction does not distort the local linearization of the fixed-point map, see Remark \ref{rem:WN_connection}.

We define the damping coefficient $c(t)$ via the deviation from this condition:
\begin{equation}
    \sum_{j=1}^m j\gamma_k^{(j)} = 1 - \sqrt{h} c(t).
    \label{eq:consistency_with_damping}
\end{equation}

Lastly, we match the terms of order $h$ (acceleration matching) in \eqref{eq:equality}:
\begin{align}
    \frac{h}{2}\ddot{x} &= -h\nabla f - \sum_{j=1}^m \gamma_k^{(j)} \frac{j^2 h}{2}\ddot{x} + \text{(terms from consistency deviation)}.
\end{align}
Substituting the damping relation $\sum j\gamma \sqrt{h}\dot{x} = (1 - \sqrt{h}c)\sqrt{h}\dot{x}$ into the $\mathcal{O}(\sqrt{h})$ match leaves a residual of $-h c(t)\dot{x}$ on the RHS. Moving this to the LHS and rearranging:

\begin{align}
    \frac{h}{2}\left[ 1 + \sum_{j=1}^m j^2 \gamma_k^{(j)} \right] \ddot{x}(t) + h c(t)\dot{x}(t) + h \nabla f(x(t)) = 0.
\end{align}

Dividing by $h$, we obtain the final theorem.

\begin{theorem}[Variable Mass ODE for Anderson Acceleration]\label{thm:variable_mass}
Under the Nesterov scaling $t_k = k\sqrt{h}$, the continuous-time limit of AA($m$) as $h \to 0$ is the following second-order ODE with \textbf{variable effective mass}:
\begin{equation}
    M_{\text{eff}}(t) \ddot{x}(t) + c(t) \dot{x}(t) + \nabla f(x(t)) = 0,
    \label{eq:variable_mass_ode}
\end{equation}
where the \textbf{effective mass} is determined by the second moment of the momentum coefficients:
    \begin{equation}
        M_{\text{eff}}(t) = \lim_{h\to 0}\frac{1}{2} \left( 1 + \sum_{j=1}^m j^2 \gamma^{(j)}(t) \right).
        \label{eq:effective_mass}
    \end{equation}
    
The \textbf{damping coefficient} $c(t)$ captures the rate of consistency violation:
    \begin{equation}
        c(t) = \lim_{h\to 0} \frac{1}{\sqrt{h}} \left( 1 - \sum_{j=1}^m j\gamma_k^{(j)} \right).
        \label{eq:damping_coefficient}
    \end{equation}
\end{theorem}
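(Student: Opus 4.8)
The computation underlying this theorem is already sketched in the paragraphs preceding the statement; what a rigorous proof adds is (i) an explicit list of hypotheses under which the formal asymptotics is legitimate and (ii) a clean order-by-order bookkeeping. The plan is as follows. First I would fix the regularity framework that gives the continuous ansatz meaning: $f\in C^{2}$ with $L$-Lipschitz gradient; a curve $x(t)$ of class $C^{2}$ with locally bounded first and second derivatives (plus enough extra smoothness, or a suitable modulus of continuity, to control an $\calO(h^{3/2})$ remainder) such that $x_{k}=x(t_{k})$ with $t_{k}=k\sqrt{h}$; a fixed memory depth $m$; and, decisively, the hypothesis that the adaptive coefficients $\gamma_{k}^{(j)}$ stabilise along the trajectory to functions $\gamma^{(j)}(t)$ and that the two limits in \eqref{eq:effective_mass} and \eqref{eq:damping_coefficient} exist. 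Under these assumptions the theorem reduces to Taylor matching.

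Second, I would substitute the ansatz into the momentum identity \eqref{eq:momentum_form} of Theorem~\ref{thm:aa_momentum} and expand its three ingredients to order $h^{3/2}$ around $t=t_{k}$: the gradient step $y_{k+1}=x(t)-h\nabla f(x(t))+\calO(h^{3/2})$; each lag difference, using $x(t-j\sqrt{h})=x(t)-j\sqrt{h}\,\dot x(t)+\tfrac12 j^{2}h\,\ddot x(t)+\calO(h^{3/2})$ together with the observation that the gradient-difference part is only $\calO(h\cdot\sqrt{h})$, which gives $y_{k+1}-y_{k-j+1}=j\sqrt{h}\,\dot x(t)-\tfrac12 j^{2}h\,\ddot x(t)+\calO(h^{3/2})$ as in \eqref{eq:lag_expansion_final}; and the left-hand side $x_{k+1}=x(t+\sqrt{h})=x(t)+\sqrt{h}\,\dot x(t)+\tfrac12 h\,\ddot x(t)+\calO(h^{3/2})$. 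Since $m$ is fixed, the finitely many remainder terms collapse into a single $\calO(h^{3/2})$ with a constant depending only on $m$ and on uniform bounds for the $\gamma_{k}^{(j)}$ and for the derivatives of $x$ on the time interval considered.

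Third, I would equate the two expansions and collect by powers of $\sqrt{h}$. The $\calO(1)$ parts cancel identically. Matching the $\calO(\sqrt{h})$ terms gives $\dot x(t)=\bigl(\sum_{j=1}^{m} j\gamma_{k}^{(j)}\bigr)\dot x(t)+\calO(\sqrt{h})$, so generically (wherever $\dot x(t)\neq0$) the coefficients must obey the kinematic consistency condition $\sum_{j=1}^{m} j\gamma_{k}^{(j)}=1+\calO(\sqrt{h})$; writing the defect as $\sum_{j} j\gamma_{k}^{(j)}=1-\sqrt{h}\,c(t)$ defines $c$, and since a short summation-by-parts on \eqref{eq:gamma_general} yields $\sum_{j=1}^{m} j\gamma_{k}^{(j)}=-\sum_{i=1}^{m}\theta_{i}^{(k)}$, this is exactly the normalisation condition on the AA mixing coefficients, cf.\ Remark~\ref{rem:WN_connection}. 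Substituting this defect back into the order-$\sqrt h$ term on the right-hand side leaves a residual $-h\,c(t)\dot x(t)$ at the next order; collecting all $\calO(h)$ terms and dividing through by $h$ then yields $\tfrac12\bigl(1+\sum_{j=1}^{m} j^{2}\gamma_{k}^{(j)}\bigr)\ddot x(t)+c(t)\dot x(t)+\nabla f(x(t))=\calO(\sqrt{h})$. Letting $h\to0$ and invoking the assumed existence of the limits produces \eqref{eq:variable_mass_ode} with $\Meff(t)$ and $c(t)$ as stated.

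The genuine obstacle is not the Taylor algebra but the legitimacy of the limiting objects. Because each $\gamma_{k}^{(j)}$ is the output of a data-dependent least-squares problem rather than a prescribed sequence, establishing that the discrete orbit genuinely tracks a $C^{2}$ curve, and that $\sum_{j} j\gamma_{k}^{(j)}\to1$ at \emph{exactly} rate $\sqrt{h}$ with a well-defined limit $c(t)$, cannot be done in full generality without further structural hypotheses on $f$ and on the conditioning of $R_{k}$; in general these are genuinely assumptions. The honest formulation --- and the one the theorem statement implicitly adopts by defining $\Meff$ and $c$ through $\lim_{h\to0}$ --- is conditional: \emph{if} those limits exist along a smooth nonstationary trajectory, \emph{then} that trajectory solves the variable-mass ODE, exactly in the spirit of the Su--Boyd--Cand\`es analysis of Nesterov's method. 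I would state this caveat explicitly, and as a sanity check verify that the depth-$1$ case with the classical Nesterov coefficient $\gamma_{k}=\tfrac{k-1}{k+2}$ gives $\Meff\to1$ and $c(t)=3/t$, recovering the AVD equation~\eqref{eq:intro_avd}.
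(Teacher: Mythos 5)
Your proposal is correct and follows essentially the same route as the paper: the theorem's justification there is precisely the Taylor expansion of the momentum form \eqref{eq:momentum_form_scaled} under the scaling $t_k=k\sqrt{h}$, with order-by-order matching at $\calO(\sqrt{h})$ (kinematic consistency, defining $c(t)$ via the defect) and at $\calO(h)$ (yielding $\Meff$), followed by division by $h$. Your additions --- the explicit regularity/limit-existence hypotheses, the identity $\sum_j j\gamma_k^{(j)}=-\sum_i\theta_i^{(k)}$, and the depth-$1$ Nesterov sanity check recovering $c(t)=3/t$ --- are sound refinements of the same argument rather than a different proof.
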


\begin{remark}[Connection to Walker-Ni \cite{walker2011anderson}, Remark~2.7]
\label{rem:WN_connection}
The scaled consistency condition
\begin{equation}
    \sum_{j=1}^m j\gamma_k^{(j)}
    = 1 + \mathcal{O}(\sqrt{h})
    \tag{\ref{eq:consistency_scaled}}
\end{equation}
is the continuous-time analogue of the discrete first-order
residual consistency described in Walker-Ni \cite{walker2011anderson}.  In their setting, the Anderson coefficients must satisfy a
first-order Taylor compatibility condition so that the least-squares correction does not distort the local linearization
of the fixed-point map.  In practice, the least-squares solution for $\gamma_k^{(j)}$ in Anderson Acceleration does not enforce this equality exactly. Moreover, the linear equivalence of AA to GMRES (Walker–Ni \cite{walker2011anderson}, Theorem~2.2) implies that the condition holds only approximately, and may fail during stagnation (Remark~2.7). Hence, the condition serves as a continuous-time guide rather than a strict discrete constraint.

Our condition enforces the same principle on the time scale $h = \Delta t$:  the weighted memory coefficients $\gamma_k^{(j)}$ reproduce the
first-order kinematic relation
\[
x(t+h) - x(t) = h\,\dot{x}(t) + \mathcal{O}(h^{3/2}),
\]
and therefore the linear combination
$\sum_{j=1}^m j\gamma_k^{(j)}$ must converge to~1 as $h\to0$. The $\mathcal{O}(\sqrt{h})$ deviation reflects the stochastic time-scale of the implicit EM discretization and the fact that the least-squares system is perturbed by residual noise. Thus, \eqref{eq:consistency_scaled} is precisely the Walker-Ni first-order compatibility condition, adapted to the continuous-time limits of Anderson acceleration.
\end{remark}

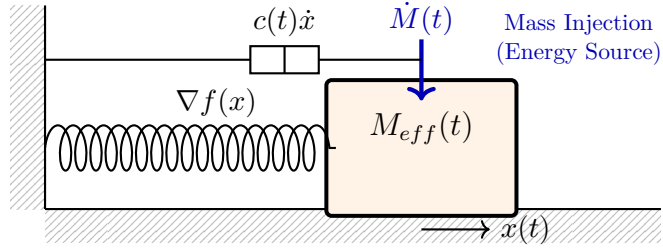
\begin{figure}[t]
    \centering
    \begin{tikzpicture}[scale=0.9]
        \tikzstyle{spring}=[decorate,decoration={coil,aspect=0.4,segment length=2mm,amplitude=3mm}, thick]
        
        \fill[pattern=north east lines, pattern color=gray!50] (-1,-0.5) rectangle (8,0);
        \draw[thick] (-1,0) -- (8,0); 
        \fill[pattern=north east lines, pattern color=gray!50] (-1.5,0) rectangle (-1,3);
        \draw[thick] (-1,0) -- (-1,3); 

        \node[draw, line width=1.5pt, minimum width=2.5cm, minimum height=1.8cm, fill=orange!10, rounded corners=2pt] (mass) at (4.5, 0.9) {};
        
        \node at (4.5, 1.2) {\large $M_{eff}(t)$};
        \draw[->, ultra thick, blue!70!black] (4.5, 2.5) -- (4.5, 1.6);
        \node[blue!70!black] at (4.5, 2.8) {$\dot{M}(t)$};
        \node[text width=3cm, align=center, scale=0.8, blue!70!black] at (6.8, 2.5) {Mass Injection\\(Energy Source)};

        \draw[spring] (-1, 0.9) -- (3.25, 0.9); 
        \node[above] at (1.5, 1.2) {$\nabla f(x)$};

        \draw[thick] (-1, 2.2) -- (2.0, 2.2); 
        \draw[thick] (2.0, 2.0) rectangle (3.0, 2.4); 
        \draw[thick] (3.0, 2.2) -- (4.5, 2.2); 
        \draw[thick] (2.5, 2.0) -- (2.5, 2.4); 
        \node[above] at (2.5, 2.4) {$c(t) \dot{x}$};
        
        \draw[->, thick] (4.5, -0.3) -- (5.5, -0.3) node[right] {$x(t)$};
    \end{tikzpicture}
    \caption{Physical Model of Anderson Acceleration. The system behaves as a damped oscillator with \textbf{variable mass}. The rate of change of mass $\dot{M}$ acts as an energy source or sink, explaining both the acceleration capability and potential instability of the algorithm.}
    \label{fig:variable_mass_model}
\end{figure}

\subsection{Impact of Memory Depth on Effective Mass}

The derived formula \eqref{eq:effective_mass} reveals how the memory depth $m$ explicitly alters the system's inertia. We analyze specific cases by imposing the kinematic consistency condition $\sum_{j=1}^m j \gamma^{(j)} = 1$:

\begin{itemize}
    \item \textbf{AA(1) (Standard Nesterov Mass):} 
    For depth $m=1$, the consistency condition forces $1 \cdot \gamma^{(1)} = 1 \implies \gamma^{(1)} = 1$. Substituting this into the mass formula:
    \begin{equation}
        M_{\text{eff}} = \frac{1}{2}\left(1 + 1^2 \cdot 1\right) = 1.
    \end{equation}
    This confirms that AA(1) recovers the constant unit mass dynamics, equivalent to the standard Nesterov acceleration limit.

    \item \textbf{AA(2) (Augmented Inertia):} 
    For depth $m=2$, consistency requires $\gamma^{(1)} + 2\gamma^{(2)} = 1$, which implies $\gamma^{(1)} = 1 - 2\gamma^{(2)}$. Substituting this into the second moment sum:
    \begin{align*}
        \sum_{j=1}^2 j^2 \gamma^{(j)} &= 1^2(1 - 2\gamma^{(2)}) + 2^2 \gamma^{(2)} \\
        &= 1 - 2\gamma^{(2)} + 4\gamma^{(2)} = 1 + 2\gamma^{(2)}.
    \end{align*}
    The effective mass becomes:
    \begin{equation}
        M_{\text{eff}} = \frac{1}{2}(1 + [1 + 2\gamma^{(2)}]) = 1 + \gamma^{(2)}.
    \end{equation}
    This shows that the second momentum coefficient $\gamma^{(2)}$ linearly augments the mass. If $\gamma^{(2)} > 0$, the system becomes ``heavier'' than Nesterov.

    \item \textbf{AA(3) (Deep Memory Effect):} 
    For depth $m=3$, using $\gamma^{(1)} = 1 - 2\gamma^{(2)} - 3\gamma^{(3)}$, the second moment sum expands to:
    \begin{align*}
        \sum_{j=1}^3 j^2 \gamma^{(j)} &= (1 - 2\gamma^{(2)} - 3\gamma^{(3)}) + 4\gamma^{(2)} + 9\gamma^{(3)} \\
        &= 1 + 2\gamma^{(2)} + 6\gamma^{(3)}.
    \end{align*}
    Substituting back into \eqref{eq:effective_mass}:
    \begin{equation}
        M_{\text{eff}} = \frac{1}{2}(2 + 2\gamma^{(2)} + 6\gamma^{(3)}) = 1 + \gamma^{(2)} + 3\gamma^{(3)}.
    \end{equation}
    Notably, the coefficient of the deepest term $\gamma^{(3)}$ is scaled by a factor of 3. This indicates that older historical information (larger lag $j$) has a disproportionately large, super-linear impact on the system's effective inertia.
\end{itemize}

\section{Continuous-Time Analysis: The High-Resolution Limit ODE}
\label{sec:continuous_derivation_hr}

To fully understand the dynamical behavior of Anderson Acceleration, particularly its stability properties in stiff regimes, the limiting analysis $h \to 0$ is insufficient. We must perform a \textbf{High-Resolution Analysis} by retaining terms of higher order in $\sqrt{h}$. This approach, pioneered for Nesterov acceleration by Shi et al. \cite{Shi2018}, reveals the implicit regularization mechanisms that vanish in the continuous limit but govern the discrete algorithm's behavior.

\subsection{Detailed Asymptotic Expansion}

We proceed with the Nesterov scaling $t_k = k\sqrt{h}$. The discrete update is given by:
\begin{equation}
    x_{k+1} = y_{k+1} + \sum_{j=1}^m \gamma_k^{(j)} (y_{k+1} - y_{k-j+1}),
    \label{eq:hr_master_update}
\end{equation}
with $y_{k+1} = x_k - h\nabla f(x_k)$.

\subsubsection{Expansion of the Lag Terms}
The critical difference between standard momentum and Anderson acceleration lies in the structure of the lag terms $y_{k+1} - y_{k-j+1}$. Unlike simple position differences, these terms contain gradient information.
Recall the definition:
\begin{align}
    y_{k+1} - y_{k-j+1} &= [x_k - h\nabla f(x_k)] - [x_{k-j} - h\nabla f(x_{k-j})] \nonumber \\
    &= \underbrace{(x_k - x_{k-j})}_{\text{Displacement}} - h \underbrace{[\nabla f(x_k) - \nabla f(x_{k-j})]}_{\text{Gradient Difference}}.
    \label{eq:lag_structure}
\end{align}

We expand both components around time $t$ using $x_k \approx x(t)$ and the time lag $\Delta t = j\sqrt{h}$.

\paragraph{1. Displacement Term ($O(h)$ accuracy)}
Using the Taylor expansion $x(t - j\sqrt{h}) = x(t) - j\sqrt{h}\dot{x}(t) + \frac{j^2 h}{2}\ddot{x}(t) + \mathcal{O}(h^{3/2})$, the displacement is:
\begin{equation}
    x_k - x_{k-j} = j\sqrt{h}\dot{x}(t) - \frac{j^2 h}{2}\ddot{x}(t) + \mathcal{O}(h^{3/2}).
    \label{eq:disp_expansion}
\end{equation}

\paragraph{2. Gradient Difference Term (High-Resolution Correction)}
This term is often neglected in low-resolution analysis (where $h\nabla f \approx h\nabla f$), but it is crucial here. Expanding the gradient:
\begin{align}
    \nabla f(x_{k-j}) &= \nabla f(x(t - j\sqrt{h})) \nonumber \\
    &= \nabla f(x(t)) + \nabla^2 f(x(t)) \cdot \underbrace{(x(t-j\sqrt{h}) - x(t))}_{\approx -j\sqrt{h}\dot{x}(t)} + \mathcal{O}(h) \nonumber \\
    &= \nabla f(x(t)) - j\sqrt{h} \nabla^2 f(x(t)) \dot{x}(t) + \mathcal{O}(h).
\end{align}
Therefore, the gradient difference is:
\begin{equation}
    \nabla f(x_k) - \nabla f(x_{k-j}) = j\sqrt{h} \nabla^2 f(x(t)) \dot{x}(t) + \mathcal{O}(h).
\end{equation}
Multiplying by the step size $h$ from Eq. \eqref{eq:lag_structure}, this term contributes at order $O(h^{3/2})$:
\begin{equation}
    -h [\nabla f(x_k) - \nabla f(x_{k-j})] = - j h^{3/2} \nabla^2 f(x(t)) \dot{x}(t) + \mathcal{O}(h^2).
    \label{eq:grad_diff_expansion}
\end{equation}

\paragraph{Combined Lag Expansion}
Substituting \eqref{eq:disp_expansion} and \eqref{eq:grad_diff_expansion} back into \eqref{eq:lag_structure}, we obtain the precise expansion for the Anderson momentum basis:
\begin{equation}
    y_{k+1} - y_{k-j+1} = \underbrace{j\sqrt{h}\dot{x} - \frac{j^2 h}{2}\ddot{x}}_{\text{Inertial Terms}} - \underbrace{j h^{3/2} \nabla^2 f(x) \dot{x}}_{\text{Hessian Damping}} + \mathcal{O}(h^{3/2}).
    \label{eq:lag_final_hr}
\end{equation}

\subsection{Deriving the High-Resolution ODE}

We now substitute the expansions into the master update equation \eqref{eq:hr_master_update}.
The LHS is expanded as $x_{k+1} = x(t) + \sqrt{h}\dot{x} + \frac{h}{2}\ddot{x} + \mathcal{O}(h^{3/2})$.
The RHS is:
\begin{align}
    \text{RHS} &= (x(t) - h\nabla f(x)) \nonumber \\
    &\quad + \sum_{j=1}^m \gamma_k^{(j)} \left[ j\sqrt{h}\dot{x} - \frac{j^2 h}{2}\ddot{x} - j h^{3/2} \nabla^2 f(x) \dot{x} \right] + \mathcal{O}(h^{3/2}).
\end{align}

Equating LHS and RHS and grouping terms by powers of $\sqrt{h}$:

\begin{align}
    \underbrace{\sqrt{h}\dot{x} + \frac{h}{2}\ddot{x}}_{\text{LHS Motion}} &= \underbrace{\left(\sum_{j=1}^m j\gamma_k^{(j)}\right)\sqrt{h}\dot{x}}_{\text{Momentum Drive}} \nonumber \\
    &\quad - \underbrace{\left(\sum_{j=1}^m j^2\gamma_k^{(j)}\right)\frac{h}{2}\ddot{x}}_{\text{Mass Effect}} - \underbrace{h\nabla f(x)}_{\text{Force}} \nonumber \\
    &\quad - \underbrace{\left(\sum_{j=1}^m j\gamma_k^{(j)}\right) h^{3/2} \nabla^2 f(x) \dot{x}}_{\text{Geometric Damping}}.
    \label{eq:master_balance}
\end{align}

We perform order matching to extract the physical parameters:

\begin{enumerate}
    \item \textbf{Order $\mathcal{O}(\sqrt{h})$ (Kinematic Consistency):}
    Matching the velocity terms $\sqrt{h}\dot{x}$ implies $\sum j\gamma_k^{(j)} \approx 1$. We define the damping coefficient $c(t)$ via the deviation:
    \begin{equation}
        \sum_{j=1}^m j\gamma_k^{(j)} = 1 - \sqrt{h} c(t).
    \end{equation}
    Substituting this into the $O(\sqrt{h})$ term on the RHS yields $(1-\sqrt{h}c)\sqrt{h}\dot{x} = \sqrt{h}\dot{x} - h c(t)\dot{x}$. The term $-h c(t)\dot{x}$ is pushed to order $\mathcal{O}(h)$.

    \item \textbf{Order $\mathcal{O}(h)$ (Newton's Law):}
    Collecting all terms of order $h$:
    \[
        \frac{h}{2}\ddot{x} = - \left(\sum j^2\gamma_k^{(j)}\right)\frac{h}{2}\ddot{x} - h c(t)\dot{x} - h\nabla f(x).
    \]
    Rearranging and dividing by $h$, we identify the effective mass:
    \[
        \frac{1}{2}\left(1 + \sum_{j=1}^m j^2\gamma_k^{(j)}\right) \ddot{x} + c(t)\dot{x} + \nabla f(x) = \dots
    \]
    Let $M_{\text{eff}}(t) = \frac{1}{2}(1 + \sum j^2\gamma_k^{(j)})$.

    \item \textbf{Order $\mathcal{O}(h^{3/2})$ (High-Resolution Correction):}
    The remaining term is the Hessian term. Using the consistency condition $\sum j\gamma_k^{(j)} \approx 1$, the coefficient becomes unity to leading order.
    The term is $- 1 \cdot h^{3/2} \nabla^2 f(x) \dot{x}$.
\end{enumerate}

Combining these orders, the force balance equation including the high-resolution correction is:
\begin{equation}
    h M_{\text{eff}}(t) \ddot{x} + h c(t) \dot{x} + h \nabla f(x) + h^{3/2} \nabla^2 f(x) \dot{x} = 0.
\end{equation}
Dividing the entire equation by $h$ recovers the $\ddot{x}$ scale and yields the final high-resolution ODE.

\begin{theorem}[High-Resolution ODE for Anderson Acceleration]\label{thm:high_res_ode}
Under the high-resolution scaling with step size $h > 0$, the continuous-time dynamics of Anderson Acceleration are characterized by the \textbf{Inertial System with Hessian-driven Damping (ISHD)}:
\begin{equation}
    M_{\text{eff}}(t) \ddot{x}(t) + c(t) \dot{x}(t) + \nabla f(x(t)) + \sqrt{h} \beta(t) \nabla^2 f(x(t))\dot{x}(t) = 0,
    \label{eq:high_res_ode_corrected}
\end{equation}
where $M_{\text{eff}}(t)$ denotes the variable effective mass defined in Theorem \ref{thm:variable_mass}, $c(t)$ represents the kinematic damping, and $\beta(t) = \sum_{j=1}^m j\gamma^{(j)}(t)$ is the geometric damping coefficient.
\end{theorem}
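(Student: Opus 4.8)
The plan is to read off \eqref{eq:high_res_ode_corrected} as a direct consequence of the momentum reformulation in Theorem~\ref{thm:aa_momentum} together with the Taylor bookkeeping already assembled in Section~\ref{sec:continuous_derivation_hr}, now carried one order in $\sqrt{h}$ beyond the proof of Theorem~\ref{thm:variable_mass}. Throughout I assume the standing regularity hypotheses inherited from Theorem~\ref{thm:variable_mass}: $f\in C^{3}$ with locally Lipschitz Hessian, the discrete iterates $x_k$ track a $C^{3}$ curve $x(t)$ whose derivatives up to third order are bounded uniformly on the compact time window under consideration, and the adaptive weights $\gamma_k^{(j)}$ admit smooth limiting profiles $\gamma^{(j)}(t)$ so that $M_{\text{eff}}(t)$, $c(t)$ and $\beta(t)=\sum_{j=1}^{m}j\gamma^{(j)}(t)$ are well defined via \eqref{eq:effective_mass}--\eqref{eq:damping_coefficient}. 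Under these assumptions every Taylor remainder below is uniform in $t$, so the $\mathcal{O}(h^{p})$ symbols may be manipulated in the usual way.

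First I would substitute the equivalent update $x_{k+1}=y_{k+1}+\sum_{j}\gamma_k^{(j)}(y_{k+1}-y_{k-j+1})$, with $y_{k+1}=x_k-h\nabla f(x_k)$, and expand each building block to order $\mathcal{O}(h^{3/2})$: the left side as $x_{k+1}=x(t)+\sqrt{h}\,\dot{x}+\tfrac{h}{2}\ddot{x}+\mathcal{O}(h^{3/2})$, the gradient step as $y_{k+1}=x(t)-h\nabla f(x(t))+\mathcal{O}(h^{3/2})$, and — this is the only genuinely new ingredient relative to Theorem~\ref{thm:variable_mass} — the Anderson basis vectors via \eqref{eq:lag_final_hr},
\[
y_{k+1}-y_{k-j+1}=j\sqrt{h}\,\dot{x}-\tfrac{j^{2}h}{2}\ddot{x}-j\,h^{3/2}\nabla^{2}f(x)\dot{x}+\mathcal{O}(h^{3/2}).
\]
The Hessian term here arises from the gradient-difference block $-h[\nabla f(x_k)-\nabla f(x_{k-j})]$ embedded in $y_{k+1}-y_{k-j+1}$, which is $\mathcal{O}(h)\cdot\mathcal{O}(\sqrt{h})$ and, after one more Taylor step on $\nabla f$ along $x(\cdot)$, equals $-j h^{3/2}\nabla^{2}f(x)\dot{x}$ to leading order, as in \eqref{eq:grad_diff_expansion}.

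Next I would insert these expansions into the update, collect powers of $\sqrt{h}$, and match order by order exactly as in Section~\ref{sec:continuous_derivation_hr}. At order $\sqrt{h}$ the velocity terms force $\sum_j j\gamma_k^{(j)}=1-\sqrt{h}\,c(t)$, which \emph{defines} the kinematic damping $c(t)$; feeding this back turns the $\mathcal{O}(\sqrt h)$ drive into $\sqrt{h}\,\dot{x}-hc(t)\dot{x}$, pushing the residual into the next layer. At order $h$ the surviving balance is $\tfrac{h}{2}\ddot{x}=-\big(\sum_j j^{2}\gamma_k^{(j)}\big)\tfrac{h}{2}\ddot{x}-hc(t)\dot{x}-h\nabla f(x)$, i.e. $hM_{\text{eff}}(t)\ddot{x}+hc(t)\dot{x}+h\nabla f(x)$ with $M_{\text{eff}}(t)=\tfrac12\big(1+\sum_j j^{2}\gamma_k^{(j)}\big)$, matching Theorem~\ref{thm:variable_mass}. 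At order $h^{3/2}$ the only contribution is $-\big(\sum_j j\gamma_k^{(j)}\big)h^{3/2}\nabla^{2}f(x)\dot{x}=-h^{3/2}\beta(t)\nabla^{2}f(x)\dot{x}+\mathcal{O}(h^{2})$. Summing the three layers gives $hM_{\text{eff}}\ddot{x}+hc\dot{x}+h\nabla f+h^{3/2}\beta\nabla^{2}f\dot{x}=o(h^{3/2})$, and dividing by $h$ yields \eqref{eq:high_res_ode_corrected}. One minor point to record: since the consistency relation gives $\beta(t)=\sum_j j\gamma^{(j)}(t)=1+\mathcal{O}(\sqrt{h})$, replacing $\beta(t)$ by $1$ in the Hessian term perturbs the ODE only at order $h^{2}$, so the sharper form $\beta(t)=\sum_j j\gamma^{(j)}(t)$ (kept for consistency with the ISHD template \eqref{eq:intro_din}) and the normalization $\beta\equiv1$ agree in the limit.

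The main obstacle is not the order-matching but its justification. I expect two places to require care: (i) showing the $\mathcal{O}(h^{3/2})$ remainders are genuinely negligible at the order extracted, which needs uniform-in-$t$ bounds on $\dddot{x}$ and on the modulus of continuity of $\nabla^{2}f$ along the trajectory; and (ii) controlling the data-dependent weights $\gamma_k^{(j)}$, which are solutions of the least-squares problem \eqref{eq:ls-problem-typedetail} and need not converge in general — so the theorem is intrinsically conditional on the existence of the smooth profiles $\gamma^{(j)}(t)$. Making (ii) precise, and ideally identifying regimes in which it provably holds (for instance near a fixed point where $g'$ is nearly constant, so that $R_k\approx g'(x^{\ast})X_k$ and the coefficients stabilize), is the delicate part of a fully rigorous statement; the rest is the routine asymptotics sketched above.
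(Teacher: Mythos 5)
Your proposal follows essentially the same route as the paper: expand the momentum form of Theorem \ref{thm:aa_momentum} under the Nesterov scaling, retain the gradient-difference block inside $y_{k+1}-y_{k-j+1}$ to produce the $h^{3/2}$ Hessian-damping term as in \eqref{eq:lag_final_hr}, match orders $\sqrt{h}$, $h$, $h^{3/2}$ to identify $c(t)$, $M_{\text{eff}}(t)$, and $\beta(t)=\sum_j j\gamma^{(j)}(t)$, and divide by $h$. Your added remarks on uniform remainder bounds and the conditional existence of smooth limiting profiles $\gamma^{(j)}(t)$ are sensible caveats the paper leaves implicit, but the argument itself coincides with the paper's derivation.
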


\begin{remark}[Mechanism of Geometric Stabilization]
The term $\sqrt{h} \nabla^2 f(x)\dot{x}$ represents a friction force that is proportional to the curvature of the loss landscape. In directions where the curvature (eigenvalue of $\nabla^2 f$) is large (stiff directions), this term provides strong damping, suppressing the high-frequency oscillations that typically destabilize momentum methods. This derivation mathematically justifies why Anderson Acceleration (and by extension GMRES) exhibits superior robustness compared to standard Nesterov acceleration in stiff problems: the algorithm implicitly leverages gradient differences to approximate Hessian-vector products, creating a "smart" damping mechanism.
\end{remark}

\section{Comparative Anatomy: AA(1) versus Nesterov Momentum}

The derivation of Anderson Acceleration as an adaptive momentum method invites a direct comparison with the gold standard of accelerated first-order methods: Nesterov's Accelerated Gradient (NAG). While both algorithms share the fundamental algebraic structure of momentum, they represent distinct philosophical approaches---one ``open-loop'' and fixed, the other ``closed-loop'' and adaptive. In this section, we analyze their relationship through the lens of the Inertial System with Hessian-driven Damping (ISHD) framework \cite{Attouch2020}, highlighting why AA(1) can be viewed as an online, greedy realization of the optimal control strategies discussed in recent Learning to Optimize (L2O) literature \cite{Xie2025}.

We begin by juxtaposing the algebraic forms. As established in Section 3, the depth-1 Anderson Acceleration (AA(1)) update can be rigorously transformed into:
\begin{equation}
    x_{k+1} = y_{k+1} + \gamma_k^{(1)} (y_{k+1} - y_k),
    \label{eq:aa1_form}
\end{equation}
where $y_{k+1} = x_k - h\nabla f(x_k)$. This algebraic form coincides with Nesterov's original (1983) formulation \cite[Eq. (2.2.9)]{Nesterov1983}. The critical distinction lies entirely in the nature of the momentum coefficient $\gamma_k^{(1)}$.

In NAG, $\gamma_k^{(1)}$ (typically denoted $\mu_k$) is \textbf{pre-determined}, following the algebraic schedule $\mu_k = \frac{k-1}{k+2}$ (or its high-resolution variant \cite{Shi2018}) to ensure the optimal $\mathcal{O}(1/k^2)$ convergence rate.

In contrast, AA(1) computes $\gamma_k^{(1)}$ \textbf{adaptively} at each step by solving the least-squares problem:
\begin{equation} \label{eq:aa1_coeff}
    \min_{\theta} \left\| r(x_k) - \theta \Delta r_{k-1} \right\|_2,
\end{equation}
where $\Delta r_{k-1} = r(x_k) - r(x_{k-1})$. The solution $\theta^{(k)} = \langle r(x_k), \Delta r_{k-1} \rangle / \|\Delta r_{k-1}\|_2^2$ maps to the momentum coefficient via the relation $\gamma_k^{(1)} = -\theta^{(k)}$ derived in Theorem \ref{thm:aa_momentum}.

From the perspective of the general ISHD equation (cf. \cite{Attouch2020})
\[ \ddot{x}(t) + a(t)\dot{x}(t) + b(t)\nabla^2 f(x(t))\dot{x}(t) + \nabla f(x(t)) = 0, \]
this algebraic difference manifests as a fundamental divergence in control strategy. NAG corresponds to a \textbf{static physical model} with asymptotic vanishing damping $a(t) = 3/t$ and a fixed regularization schedule $b(t)$. This schedule is derived from global worst-case analysis and does not adapt to specific local curvature.

Conversely, AA(1) operates as an \textbf{Online Learning to Optimize} method \cite{Xie2025}. The ODE equation for AA(1) is
\begin{equation}
    \underbrace{\frac{1 + \gamma^{(1)}(t)}{2} \ddot{x}}_{\text{Variable Inertia}} + \underbrace{\frac{1 - \gamma^{(1)}(t)}{\sqrt{h}} \dot{x}}_{\text{Kinematic Damping}} + \nabla f(x) + \underbrace{\sqrt{h} \gamma^{(1)}(t) \nabla^2 f(x) \dot{x}}_{\text{Hessian Damping}} = 0.
    \label{eq:aa1_ode}
\end{equation}
By minimizing \eqref{eq:aa1_coeff}, AA(1) effectively estimates local curvature on-the-fly, selecting $\gamma^{(1)}$ to match the current landscape. This adaptability allows AA(1) to potentially achieve faster convergence in favorable regimes.

However, this greedy adaptivity comes at the cost of global stability. Unlike NAG, which guarantees monotonic energy decrease, AA(1) selects coefficients based on algebraic residual minimization, disregarding physical energy considerations. This can lead to two primary failure modes:
\begin{enumerate}
    \item \textbf{Energy Injection:} Based on the damping definition in Theorem \ref{thm:variable_mass}, for $m=1$ we have the relation $\gamma_k^{(1)} = 1 - \sqrt{h}c(t_k)$. If AA selects an excessive momentum coefficient $\gamma_k^{(1)} > 1$ (e.g., during stagnation where residual differences are small), the system exhibits \textbf{negative damping} ($c(t) < 0$). This physically corresponds to injecting energy into the system, violating passivity and causing instability.
    
    \item \textbf{Overfitting Curvature:} The linearized residual model may fail in highly nonlinear regions. The locally optimal $\gamma_k^{(1)}$ for the residual may correspond to a disastrously unstable trajectory for the global dynamics, analogous to the discretization instability in stiff ODEs \cite{Xie2025}.
\end{enumerate}

This comparison elucidates the necessity of our proposed \textbf{Energy-Guarded AA (EG-AA)}, which imposes energy-dissipation constraints to retain adaptivity while ensuring Lyapunov stability.

\section{Energy Dissipation and Stability Analysis}

To analyze stability, we employ a Lyapunov energy function. A rigorous analysis requires defining the continuous mass $M_{\text{eff}}(t)$ derived from the discrete sequence $M_k$.

\begin{definition}[Continuous Effective Mass]
We define the continuous effective mass $M_{\text{eff}}(t)$ as a smooth interpolation of the discrete mass sequence $M_k = \frac{1}{2}\big(1 + \sum_{j=1}^m j^2 \gamma_k^{(j)}\big)$. Under the Nesterov scaling $t_k = k\sqrt{h}$, the time derivative approximates the finite difference over the time step $\Delta t = \sqrt{h}$:
\begin{equation}
    \dot{M}_{\text{eff}}(t) \approx \frac{M_{k+1} - M_k}{\sqrt{h}}.
\end{equation}
\end{definition}

Consider the total mechanical energy
\begin{equation}
    \mathcal{E}(t) = \frac{1}{2} M_{\text{eff}}(t) \| \dot{x}(t) \|^2 + \big(f(x(t)) - f^*\big),
\end{equation}
where $f^* = \min f$. Differentiating with respect to time and substituting the ODE dynamics yields the central dissipation law.

\begin{theorem}[Energy Dissipation Rate]\label{thm:energy_dissipation}
Consider the high-resolution ODE \eqref{eq:high_res_ode_corrected}. Assuming the consistency condition holds such that the Hessian coefficient $\beta(t) \approx 1$, the rate of change of the total energy is:
\begin{equation}
    \frac{\dd}{\dd t} \mathcal{E}(t) 
    = - \left( c(t) - \frac{1}{2} \dot{M}_{\text{eff}}(t) \right) \| \dot{x}(t) \|^2 
      - \sqrt{h} \, \langle \dot{x}(t), \nabla^2 f(x(t)) \dot{x}(t) \rangle.
    \label{eq:decay-rate}
\end{equation}
\end{theorem}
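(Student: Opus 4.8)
The plan is to differentiate the Lyapunov functional $\mathcal{E}(t)$ directly and then eliminate $\ddot{x}$ using the high-resolution ODE \eqref{eq:high_res_ode_corrected}. Assuming $f\in C^2$ and that $\Meff(t)$ is the $C^1$ interpolant supplied by the Definition of Continuous Effective Mass (so that $\dot{\Meff}$ exists and agrees with the stated finite-difference quotient up to $\mathcal{O}(\sqrt{h})$), the chain rule gives
\[
\frac{\dd}{\dd t}\mathcal{E}(t) = \tfrac{1}{2}\dot{\Meff}(t)\,\|\dot{x}(t)\|^2 + \Meff(t)\,\langle \dot{x}(t),\ddot{x}(t)\rangle + \langle \nabla f(x(t)),\dot{x}(t)\rangle,
\]
where the last term uses $\frac{\dd}{\dd t}\big(f(x(t))-f^*\big) = \langle \nabla f(x(t)),\dot{x}(t)\rangle$. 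This is the only place smoothness of $x(\cdot)$ and of the interpolant $\Meff(\cdot)$ is needed.

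Next I would solve \eqref{eq:high_res_ode_corrected} for the inertial term, $\Meff\ddot{x} = -c(t)\dot{x} - \nabla f(x) - \sqrt{h}\,\beta(t)\nabla^2 f(x)\dot{x}$, take the inner product with $\dot{x}$, and substitute. The force term $\langle \nabla f(x),\dot{x}\rangle$ cancels against the $-\langle \nabla f(x),\dot{x}\rangle$ produced by the substitution — this cancellation is exactly what motivates including $f(x)-f^*$ in $\mathcal{E}$ — leaving
\[
\frac{\dd}{\dd t}\mathcal{E}(t) = -\Big(c(t) - \tfrac{1}{2}\dot{\Meff}(t)\Big)\|\dot{x}(t)\|^2 - \sqrt{h}\,\beta(t)\,\langle \dot{x}(t),\nabla^2 f(x(t))\dot{x}(t)\rangle.
\]
Invoking the stated hypothesis $\beta(t)\approx 1$ — recall $\beta(t)=\sum_{j} j\gamma^{(j)}(t)$, whose deviation from $1$ is $\mathcal{O}(\sqrt{h})$ by the kinematic consistency condition, so that $(\beta(t)-1)\sqrt{h}\langle\dot{x},\nabla^2 f\dot{x}\rangle=\mathcal{O}(h)$ sits below the $\mathcal{O}(h^{3/2}/h)$ truncation level already accepted in Theorem~\ref{thm:high_res_ode} — replaces $\beta(t)$ by $1$ and yields exactly \eqref{eq:decay-rate}.

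The computation itself is a one-line differentiation, so the only genuine obstacle is conceptual/regularity bookkeeping rather than hard analysis: (i) one must fix, as a standing assumption, that the discrete-to-continuous interpolation of $M_k$ is smooth enough for $\dot{\Meff}$ to be meaningful and to coincide with $(M_{k+1}-M_k)/\sqrt{h}$ up to controlled error; and (ii) one must be consistent about orders, checking that dropping $(\beta-1)$ is legitimate at the order to which \eqref{eq:high_res_ode_corrected} was derived. I would flag that the sign of the $\tfrac12\dot{\Meff}$ contribution is not fixed a priori — the structural point of the theorem is precisely that it combines with $c(t)$ into an \emph{effective} damping $c(t)-\tfrac12\dot{\Meff}(t)$, so unchecked mass growth $\dot{\Meff}>2c$ behaves as negative damping — and this interpretive remark, rather than any inequality, is the payload of the result.
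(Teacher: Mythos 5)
Your proposal is correct and follows essentially the same route as the paper: differentiate $\mathcal{E}(t)$, substitute $M_{\text{eff}}\ddot{x} = -c\dot{x} - \nabla f - \sqrt{h}\,\nabla^2 f\,\dot{x}$ from \eqref{eq:high_res_ode_corrected}, and cancel the $\langle \nabla f,\dot{x}\rangle$ terms. Your extra bookkeeping on retaining $\beta(t)$ and justifying its replacement by $1$ at the accepted truncation order is a slight refinement of the paper's argument (which invokes $\beta\approx 1$ up front), but the proof is the same.
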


\begin{proof}
Differentiating $\mathcal{E}(t)$ with respect to time yields:
\[
    \frac{\dd \mathcal{E}}{\dd t} 
    = \frac{1}{2} \dot{M}_{\text{eff}}(t) \| \dot{x} \|^2 
      + M_{\text{eff}}(t) \langle \dot{x}, \ddot{x} \rangle 
      + \langle \nabla f(x), \dot{x} \rangle.
\]
Substituting the force balance from the high-resolution ODE, $M_{\text{eff}}\ddot{x} = -c\dot{x} - \nabla f - \sqrt{h}\nabla^2 f \dot{x}$, into the inner product:
\begin{align*}
    \frac{\dd \mathcal{E}}{\dd t}
    &= \frac{1}{2} \dot{M}_{\text{eff}} \| \dot{x} \|^2 
       + \big\langle \dot{x}, - c \dot{x} - \nabla f - \sqrt{h}\,\nabla^2 f \dot{x} \big\rangle 
       + \langle \nabla f, \dot{x} \rangle \\[1mm]
    &= \frac{1}{2} \dot{M}_{\text{eff}} \| \dot{x} \|^2 
       - c \| \dot{x} \|^2 
       - \sqrt{h} \langle \dot{x}, \nabla^2 f \dot{x} \rangle 
       - \cancel{\langle \dot{x}, \nabla f \rangle} 
       + \cancel{\langle \nabla f, \dot{x} \rangle} \\[1mm]
    &= -\left( c(t) - \frac{1}{2} \dot{M}_{\text{eff}}(t) \right) \| \dot{x}(t) \|^2 
       - \sqrt{h} \, \langle \dot{x}(t), \nabla^2 f(x(t)) \dot{x}(t) \rangle.
\end{align*}
This completes the proof.
\end{proof}

The dissipation law \eqref{eq:decay-rate} reveals two competing physical mechanisms:

\begin{enumerate}
    \item \textbf{Hessian-driven Geometric Damping:} \\
          The term $-\sqrt{h} \langle \dot{x}, \nabla^2 f \dot{x} \rangle$ is dissipative for convex functions ($\nabla^2 f \succeq 0$). It provides implicit regularization that scales with the step size $\sqrt{h}$, suppressing high-frequency oscillations in stiff directions.
          
    \item \textbf{Mass-Variation Effect:} \\
          The term $\frac{1}{2} \dot{M}_{\text{eff}}(t) \| \dot{x} \|^2$ represents energy injection if the effective mass grows ($\dot{M}_{\text{eff}} > 0$). Mathematically, a rapid increase in inertia reduces the net effective friction, acting as \emph{negative damping}.
\end{enumerate}

Consequently, a necessary condition for monotonic energy decay ($\dot{\mathcal{E}} \le 0$) on convex problems is:
\begin{equation}
    c(t) \ge \frac{1}{2} \dot{M}_{\text{eff}}(t).
    \label{eq:stability_condition}
\end{equation}
Transcribing this to the discrete algorithm via the finite-difference approximation implies:
\begin{equation}
    c_k \ge \frac{M_{k+1} - M_k}{2\sqrt{h}}.
    \label{eq:discrete_stability}
\end{equation}
Standard Anderson acceleration minimizes residuals greedily and may violate this condition by sharply increasing the effective mass (momentum), thereby injecting energy into the system and potentially causing divergence.

\section{Algorithm Design: Energy-Guarded Anderson Acceleration}

The theoretical analysis in the preceding sections identified the precise mechanism of instability in Anderson Acceleration: the unchecked growth of the effective inertial mass $M_{\text{eff}}(t)$. Standard AA minimizes the algebraic residual $\|r_k - R_k \theta\|$ in a greedy manner. However, this minimization is blind to the system's kinetic energy state and can inadvertently select coefficients that violate the energy dissipation condition $c(t) \ge \frac{1}{2} \dot{M}_{\text{eff}}(t)$, effectively injecting energy into the system and causing divergence.

Guided by the Variable Mass ODE theory, we propose an improved algorithm: \textbf{Energy-Guarded Anderson Acceleration (EG-AA)}. This algorithm incorporates two physically-motivated stabilization mechanisms—an inertial governor and explicit Hessian damping—to enforce thermodynamic consistency while retaining the acceleration benefits.

\subsection{Stabilization Mechanisms}

The primary innovation of EG-AA is the ``Inertial Governor''. Recall from Theorem \ref{thm:variable_mass} that the effective mass at iteration $k$ is
\begin{equation}
    M_k = \frac{1}{2} \Bigl( 1 + \sum_{j=1}^{m_k} j^2 \gamma_k^{(j)} \Bigr).
\end{equation}
The stability condition \eqref{eq:discrete_stability} implies that $c_k$ must balance the mass variation rate $(M_k - M_{k-1})/(2\sqrt{h})$. 
Since $c_k$ is implicitly determined by the algorithm's consistency deviation, we impose a conservative, explicit bound on the mass growth:
\begin{equation}
    \Delta M_k = M_k - M_{k-1} \le \delta \sqrt{h},
    \label{eq:mass_bound}
\end{equation}
where $\delta > 0$ is a dimensionless control parameter representing the maximum allowable rate of kinetic energy injection. If the raw AA coefficients violate \eqref{eq:mass_bound}, the governor scales down all $\gamma_k^{(j)}$ to satisfy the bound.

Additionally, from the high-resolution ODE, the term $\sqrt{h}\nabla^2 f \dot{x}$ provides implicit geometric damping. To enhance robustness in stiff regimes, we add an explicit correction aligned with the curvature vector:
\begin{equation}
    D_k = -\eta \sqrt{h}\Delta g_{k-1},
    \label{eq:damping_term}
\end{equation}
where $\eta \ge 0$ controls the damping strength, $\Delta g_{k-1} = \nabla f(x_k) - \nabla f(x_{k-1})$, and $\dot{x}_k \approx (x_k - x_{k-1})/\sqrt{h}$ is the discrete velocity.

\begin{algorithm}
\caption{Energy-Guarded Anderson Acceleration (EG-AA)}
\label{alg:eg_aa}
\begin{algorithmic}[1]
\State \textbf{Input:} Initial point $x_0$, memory depth $m$, step size $\beta$, mass growth limit $\delta_{max}$, damping factor $\eta$.
\State \textbf{Initialize:} $x_1 = x_0 - \beta \nabla f(x_0)$, $M_{prev} = 1$ (initial effective mass).
\For{$k = 1, 2, \dots$}
    \State \textit{// Step 1: Standard Gradient Descent Step}
    \State $y_{k+1} = x_k - \beta \nabla f(x_k)$
    
    \State \textit{// Step 2: Compute Raw AA Coefficients}
    \State Solve $\min_{\theta} \| r_k - R_k \theta \|_2$ to obtain $\theta^{(k)} \in \mathbb{R}^{m_k}$.
    \State Transform $\theta^{(k)}$ to momentum coefficients $\gamma^{(j)}$ using Eq. \eqref{eq:gamma_general}:
    \State \quad $\gamma^{(j)} = \theta_{m_k-j} - \theta_{m_k-j+1}$ \quad (with boundary conditions $\theta_0=\theta_{m_k+1}=0$).
    
    \State \textit{// Step 3: Compute Candidate Effective Mass}
    \State $M_{curr} = \frac{1}{2} \left( 1 + \sum_{j=1}^{m_k} j^2 \gamma^{(j)} \right)$
    \State $\Delta M = M_{curr} - M_{prev}$
    
    \State \textit{// Step 4: The Energy Guard (Inertial Governor)}
    \State Initialize scaling factor $\rho = 1$.
    \If{$\Delta M > \delta_{max}$} 
        \State $\rho = \delta_{max} / \Delta M$ \quad \Comment{Clamp explosive mass growth}
    \ElsIf{$M_{curr} < 0$}
        \State $\rho = -1 / (2 M_{curr} - 1)$ \quad \Comment{Enforce positive physical mass}
    \EndIf
    
    \If{$\rho < 1$}
        \State Dampen momentum coefficients: $\gamma^{(j)} \leftarrow \rho \cdot \gamma^{(j)}$ for all $j$.
        \State Recalculate $M_{curr}$ (now satisfies stability constraints).
    \EndIf
    \State $M_{prev} \leftarrow M_{curr}$
    
    \State \textit{// Step 5: High-Resolution Correction (Hessian Damping)}
    \State Compute gradient difference $\Delta g_{last} = \nabla f(x_k) - \nabla f(x_{k-1})$.
    \State Define geometric damping term $D_k = -\eta \sqrt{\beta} \Delta g_{last}$.
    
    \State \textit{// Step 6: Final Update}
    \State $x_{k+1} = y_{k+1} + \sum_{j=1}^{m_k} \gamma^{(j)} (y_{k+1} - y_{k-j+1}) + D_k$
\EndFor
\end{algorithmic}
\end{algorithm}

\begin{remark}[Hybrid Nature of EG-AA]
The ``Inertial Governor'' mechanism effectively creates a hybrid dynamical system. In smooth regions where mass growth is moderate ($\Delta M \le \delta_{\max}\sqrt{\beta}$), EG-AA behaves identically to standard Anderson Acceleration, exploiting the full memory for rapid convergence. However, when the geometry becomes stiff and AA attempts to dangerously increase inertia, the governor engages, smoothly scaling the momentum coefficients $\gamma^{(j)}$. This transition ensures that the algorithm remains aggressive when possible but strictly dissipative when necessary, acting as an adaptive blending between high-inertia acceleration and robust gradient descent.
\end{remark}

\begin{remark}[Recoverability and Consistency]
EG-AA encompasses standard Anderson Acceleration as a limiting case. Specifically, as $\delta_{\max} \to \infty$ (removing the mass constraint) and $\eta \to 0$ (removing explicit damping), the constraints in Algorithm \ref{alg:eg_aa} become inactive, and the update rule algebraically recovers standard AA. This guarantees that EG-AA is \textbf{theoretically no worse} than standard AA in well-conditioned regimes, while the added stability constraints activate only to prevent the specific divergence modes identified in our ODE analysis. This recoverability is further corroborated by our numerical experiments in Section \ref{sec:numerical}, which demonstrate that EG-AA matches the convergence speed of AA in well-conditioned regimes while successfully preventing divergence in stiff landscapes where AA fails.
\end{remark}

\section{Convergence Analysis: Linking Mass to the Acceleration Gain Factor}

In this section, we establish a rigorous convergence framework for the Energy-Guarded Anderson Acceleration (EG-AA). A key theoretical challenge is connecting the discrete \textbf{Acceleration Gain Factor} $\delta_k$ (derived from linear algebra) to the continuous \textbf{Effective Mass} $M_{\text{eff}}$ (derived from ODEs). Moving beyond asymptotic stability analysis, we quantify the convergence speedup by introducing the Acceleration Gain Factor, which measures the geometric contraction achieved by the algorithm at each step relative to the local curvature.

This analysis decomposes the EG-AA update into two distinct components: a \textbf{Linear Projection} (representing the ideal acceleration in a quadratic basin) and a \textbf{Nonlinear Correction} (representing the approximation error). We prove that EG-AA strictly improves upon gradient descent by maximizing the linear gain while actively suppressing the nonlinear error via mass control.

\subsection{Geometric Formulation of Anderson Mixing}

We consider the unconstrained minimization of a twice continuously differentiable function $f(x)$. We assume $f(x)$ is $\mu$-strongly convex and its gradient is $L$-Lipschitz continuous. Furthermore, to rigorously bound the acceleration error, we assume the Hessian $\nabla^2 f(x)$ is Lipschitz continuous with constant $L_H$.

Anderson Acceleration seeks to minimize the residual vector within the subspace spanned by historical variations. Let $m_k$ be the memory depth at iteration $k$. Consistent with previous sections, we define the history matrix of residual differences as:
\[
    R_k = [\Delta r_{k-1}, \dots, \Delta r_{k-m_k}] \in \mathbb{R}^{n \times m_k},
\]
where $\Delta r_{j} = r_{j+1} - r_j$. The core mechanism of Anderson mixing is to find linear combination coefficients that minimize the residual norm. In the regularized setting, this operation is mathematically equivalent to projecting the current gradient onto the orthogonal complement of the history subspace.

\begin{definition}[The Acceleration Gain Factor]
Consider the regularized least-squares problem inherent to AA. The optimal residual update corresponds to the action of a projection-like operator $\Pi_k$ on the current gradient $\nabla f(x_k)$. We define:
\begin{equation}
    \Pi_k := I - R_k (R_k^\top R_k + \lambda I)^{-1} R_k^\top,
\end{equation}
where $\lambda > 0$ is a regularization parameter ensuring numerical stability. The \textbf{Acceleration Gain Factor} $\delta_k$ is defined as the contraction ratio:
\begin{equation}
    \delta_k := \frac{\| \Pi_k \nabla f(x_k) \|}{\| \nabla f(x_k) \|}.
\end{equation}
By the property of orthogonal projections (approached as $\lambda \to 0$), we have $0 \le \delta_k \le 1$.
\end{definition}

\textbf{Physical Interpretation:} The factor $\delta_k$ represents the "spectral filtering" capability of AA. If the current gradient $\nabla f(x_k)$ lies predominantly in the subspace of past search directions $R_k$ (a scenario typical in "narrow valley" landscapes where gradients are highly correlated), the operator $\Pi_k$ effectively removes these components, yielding $\delta_k \ll 1$.

\subsection{The Link Between Gain and Mass}

A critical insight is that the Acceleration Gain Factor $\delta_k$ is intimately connected to the effective mass $M_{\text{eff}}$. In the quadratic basin of attraction, a smaller $\delta_k$ (better projection) typically requires larger mixing coefficients $\gamma^{(j)}$, which in turn implies a \textbf{larger effective mass} $M_{\text{eff}}$ according to Eq. \eqref{eq:effective_mass}. 

To derive a precise relation, we recall the momentum form of the AA update derived in Section 3:
\begin{equation}
    x_{k+1} = y_{k+1} + \sum_{j=1}^{m_k} \gamma^{(j)} (y_{k+1} - y_{k-j+1}),
\end{equation}
where $y_{k+1} = x_k - \beta \nabla f(x_k)$. Under the Nesterov scaling regime where the step size $\beta = \alpha h$, with $h$ being the continuous-time step, we analyze the leading-order behavior. The dominant contribution to the displacement comes from the gradient step and the momentum terms. By analyzing the scaling of each component, we obtain the approximation:
\begin{equation}
    \| \Delta x_k \| = \| x_{k+1} - x_k \| \approx \beta M_{\text{eff}}^{(k)} \| \nabla f(x_k) \|.
    \label{eq:displacement-mass}
\end{equation}
This relation captures the physical intuition that a large effective mass (high momentum) directly translates to a large spatial step for a given gradient force.

\subsection{Robustness Analysis: The Nonlinear Discrepancy}

While a large step (high mass) maximizes the linear gain, it amplifies the nonlinear approximation error. To understand this trade-off, we analyze the discrepancy between linear and nonlinear dynamics.

Standard AA assumes locally quadratic behavior, effectively approximating the Hessian as constant $H_k \approx \nabla^2 f(x_k)$. The algorithm constructs $\Delta x_k$ such that the \emph{linearized} future gradient is minimized:
\begin{equation}
    \nabla f_{\text{lin}}(x_{k+1}) := \nabla f(x_k) + H_k \Delta x_k \approx \Pi_k \nabla f(x_k).
\end{equation}
The norm of this linearized target is exactly our theoretical gain: $\| \nabla f_{\text{lin}}(x_{k+1}) \| = \delta_k \| \nabla f(x_k) \|$.

However, the \emph{true} gradient norm differs due to the variation of the Hessian along the path. By the Lipschitz continuity of the Hessian ($L_H$), we have the standard bound:
\begin{equation}
    \| \nabla f(x_{k+1}) - \nabla f_{\text{lin}}(x_{k+1}) \| \le \frac{L_H}{2} \| \Delta x_k \|^2.
\end{equation}
Consequently, the actual gradient evolution is governed by:
\begin{equation}
    \| \nabla f(x_{k+1}) \| \le \underbrace{\delta_k \| \nabla f(x_k) \|}_{\text{Linear Gain}} + \underbrace{\frac{L_H}{2} \| \Delta x_k \|^2}_{\text{Nonlinear Penalty}}.
    \label{eq:nonlinear-bound}
\end{equation}

Substituting the displacement-mass relation \eqref{eq:displacement-mass} into \eqref{eq:nonlinear-bound}, we obtain:
\begin{align}
    \| \nabla f(x_{k+1}) \| &\le \delta_k \| \nabla f(x_k) \| + \frac{L_H}{2} \left( \beta M_{\text{eff}}^{(k)} \| \nabla f(x_k) \| \right)^2 \nonumber \\
    &= \left( \delta_k + \frac{L_H \beta^2}{2} (M_{\text{eff}}^{(k)})^2 \| \nabla f(x_k) \| \right) \| \nabla f(x_k) \|.
\end{align}
We define the \textbf{Realized Contraction Ratio} as:
\begin{equation}
    \mathcal{R}_{\text{realized}} := \delta_k + \mathcal{C} \cdot (M_{\text{eff}}^{(k)})^2,
    \label{eq:tradeoff}
\end{equation}
where $\mathcal{C} = \frac{L_H \beta^2}{2} \| \nabla f(x_k) \|$ is a curvature-dependent constant scaling with the square of the step size.

\subsection{Proof of Robustness via Energy Guard}

Equation \eqref{eq:tradeoff} reveals the fundamental trade-off in Anderson Acceleration: minimizing $\delta_k$ (maximizing linear gain) requires increasing $M_{\text{eff}}$, which in turn amplifies the nonlinear penalty quadratically. This leads to our main convergence theorem.

\begin{theorem}[Robustness of EG-AA via Mass Control]\label{thm:robustness_egaa}
Let $M_{\text{eff}}^{(k)}$ be the effective mass generated by the Anderson mixing coefficients. The EG-AA algorithm guarantees that the realized convergence rate $\mathcal{R}_{\text{realized}}$ is strictly controlled, provided the Energy Guard threshold $\delta_{\max}$ is set effectively.
\end{theorem}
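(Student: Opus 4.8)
The plan is to convert the qualitative claim into a quantitative linear-convergence statement of the form $\|\nabla f(x_{k+1})\|\le\bar\rho\,\|\nabla f(x_k)\|$ with an explicit $\bar\rho<1$, by using the inertial governor to bound the effective mass and then closing an induction that keeps the curvature constant $\mathcal{C}$ shrinking. First I would extract the mass bound directly from Algorithm~\ref{alg:eg_aa}: Step~4 enforces $M_{\text{eff}}^{(k)}-M_{\text{eff}}^{(k-1)}\le\delta_{\max}\sqrt{\beta}$ (cf. \eqref{eq:mass_bound}) and $M_{\text{eff}}^{(k)}>0$ at every iteration, so telescoping from $M_{\text{eff}}^{(0)}=1$ gives $0<M_{\text{eff}}^{(k)}\le 1+k\,\delta_{\max}\sqrt{\beta}=1+\delta_{\max}t_k$; over a horizon $t_k\le T$ this yields the uniform bound $M_{\text{eff}}^{(k)}\le M_{\max}:=1+\delta_{\max}T$ (a periodic restart of the memory window, or a local analysis near the minimizer, removes the $T$-dependence entirely).

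Next I would feed this bound, together with the displacement--mass estimate \eqref{eq:displacement-mass}, into the nonlinear error bound \eqref{eq:nonlinear-bound}, obtaining
\[
\|\nabla f(x_{k+1})\|\le\Big(\delta_k+\tfrac{L_H\beta^2}{2}M_{\max}^2\,\|\nabla f(x_k)\|\Big)\|\nabla f(x_k)\|=\mathcal{R}_{\text{realized}}\,\|\nabla f(x_k)\|.
\]
Since $\Pi_k$ is a (regularized) orthogonal projection we always have $\delta_k\le1$, while $\mu$-strong convexity and $L$-smoothness give the pure-gradient contraction $\rho_{\mathrm{GD}}=\max(|1-\beta\mu|,|1-\beta L|)<1$; moreover when $\nabla f(x_k)$ is nearly orthogonal to the history subspace the raw least-squares solution---hence every $\gamma_k^{(j)}$, and thus $M_{\text{eff}}^{(k)}-\tfrac12$ by \eqref{eq:effective_mass}---is $\calO(\lambda^{-1}\|R_k^\top r_k\|)$ small, so in that regime EG-AA degrades gracefully toward damped gradient descent. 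The key design choice is then to pick $\delta_{\max}$ so that $\tfrac{L_H\beta^2}{2}M_{\max}^2\,\|\nabla f(x_0)\|\le\tfrac12(1-\rho_{\mathrm{GD}})$; this forces $\mathcal{R}_{\text{realized}}\le\delta_k+\tfrac12(1-\rho_{\mathrm{GD}})\le\tfrac12(1+\rho_{\mathrm{GD}})=:\bar\rho<1$ on the first step.

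I would then close the loop by induction on $k$: $\mathcal{R}_{\text{realized}}\le\bar\rho$ gives $\|\nabla f(x_{k+1})\|\le\bar\rho\|\nabla f(x_k)\|\le\|\nabla f(x_0)\|$, so the curvature constant $\mathcal{C}_k=\tfrac{L_H\beta^2}{2}\|\nabla f(x_k)\|$ from \eqref{eq:tradeoff} is non-increasing and the hypothesis of the previous step reproduces itself at step $k+1$; in fact $\mathcal{C}_k\to0$ geometrically, so $\mathcal{R}_{\text{realized}}=\delta_k+o(1)$ and EG-AA asymptotically contracts at the linear-subspace rate $\delta_k$, hence strictly faster than gradient descent whenever the history subspace is informative ($\delta_k<\rho_{\mathrm{GD}}$) and never worse up to the vanishing penalty. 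Two bookkeeping facts enter here: the governor's rescaling $\gamma_k^{(j)}\leftarrow\rho\gamma_k^{(j)}$ only decreases $M_{\text{eff}}^{(k)}$, so it shrinks the nonlinear penalty at the cost of at most reverting toward the GD step (absorbed by $\delta_k\le1$); and the explicit term $D_k=-\eta\sqrt{\beta}\,\Delta g_{k-1}$, being aligned with $\nabla^2 f\,\dot x$, is dissipative for convex $f$ by Theorem~\ref{thm:energy_dissipation} and contributes only lower-order terms to \eqref{eq:nonlinear-bound} (or one simply takes $\eta=0$ for the clean statement).

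The hard part will be making the chain rigorous rather than leading-order. Estimate \eqref{eq:displacement-mass} is only the dominant term under the Nesterov scaling, so a fully rigorous proof needs an explicit remainder, $\|\Delta x_k\|\le\beta M_{\text{eff}}^{(k)}\|\nabla f(x_k)\|+\calO(\beta^{3/2})$, whose square must be re-absorbed into the same margin $\tfrac12(1-\rho_{\mathrm{GD}})$---effectively forcing $\beta$ (or $\delta_{\max}$) small, which is the quantitative meaning of ``$\delta_{\max}$ set effectively.'' A secondary obstacle is the case analysis forced by the governor: when it is active one must check that the scaled coefficients still satisfy $\tilde\delta_k\le1$ and do not raise the mass, and when it is inactive one must rule out $M_{\text{eff}}^{(k)}$ already having grown large from accumulated increments---which is precisely why the horizon-dependent bound $M_{\max}=1+\delta_{\max}T$ (or memory restarts) enters the hypothesis. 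I expect the cleanest route is to isolate the per-step inequality $\mathcal{R}_{\text{realized}}\le\bar\rho$ as a standalone lemma under the inductive hypotheses $\|\nabla f(x_k)\|\le\|\nabla f(x_0)\|$ and $M_{\text{eff}}^{(k)}\le M_{\max}$, and then let the induction together with the telescoped mass bound \eqref{eq:mass_bound} do the rest.
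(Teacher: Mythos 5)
Your proposal follows essentially the same route as the paper: the Energy Guard's per-step clamp \eqref{eq:mass_bound} is used to bound the effective mass, which in turn bounds the quadratic penalty term in the realized contraction ratio \eqref{eq:tradeoff}, so that the linear gain $\delta_k$ stays dominant. In fact, the paper's own proof is only the qualitative core of this argument (a two-case comparison of greedy AA versus the clamped coefficients, asserting that bounded mass keeps the penalty a higher-order perturbation), whereas you push toward an explicit linear-convergence statement with a chosen $\delta_{\max}$, a telescoped mass bound, and an induction on $\|\nabla f(x_k)\|$. That extra quantitative layer is a genuine strengthening, and your observation that the clamp only bounds increments---so boundedness of $M_{\text{eff}}^{(k)}$ is horizon-dependent unless one restarts or argues locally---is sharper than the paper, whose assertion that the mass ``remains bounded'' silently has the same issue. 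Two caveats if you insist on the per-step rate $\bar\rho<1$: the inequality $\delta_k+\tfrac12(1-\rho_{\mathrm{GD}})\le\tfrac12(1+\rho_{\mathrm{GD}})$ requires $\delta_k\le\rho_{\mathrm{GD}}$, which the projection bound $\delta_k\le 1$ does not supply (when $\nabla f(x_k)$ is nearly orthogonal to the span of $R_k$ one has $\delta_k\approx 1$, and \eqref{eq:nonlinear-bound} then yields only $\mathcal{R}_{\text{realized}}\lesssim 1$; your ``degrades to damped gradient descent'' remark is the right repair, but it lives outside the decomposition \eqref{eq:nonlinear-bound} and would need its own estimate replacing $\delta_k$ by the gradient-step contraction in that regime); and the displacement--mass relation \eqref{eq:displacement-mass} is itself only a leading-order heuristic, as you already flag. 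Neither caveat puts you below the paper's level of rigor---Theorem \ref{thm:robustness_egaa} as stated and proved in the paper claims only that the penalty term is kept a controlled higher-order perturbation---so your argument establishes at least as much as the paper's proof, and the residual gaps you identify are precisely the ones a fully rigorous version (paper's or yours) would have to close.
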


\begin{proof}
We analyze the behavior of standard AA and EG-AA in the context of Eq. \eqref{eq:tradeoff}:

\begin{itemize}
    \item \textbf{Standard AA (Greedy Optimization):} Standard AA solves the unconstrained least-squares problem to minimize the algebraic residual. In ill-conditioned regions (e.g., flat valleys where $\nabla f \approx 0$), the history matrix $R_k$ becomes nearly singular. To achieve a very small $\delta_k$, the solver may generate arbitrarily large coefficients $\theta$, causing the effective mass to explode ($M_{\text{eff}} \to \infty$). Since the penalty term scales quadratically with mass ($\mathcal{C} M_{\text{eff}}^2$), the penalty grows much faster than the linear gain $\delta_k$ decreases. The condition $\mathcal{R}_{\text{realized}} < 1$ may be violated, leading to instability.

    \item \textbf{EG-AA (Physics-Informed Optimization):} EG-AA treats the convergence condition as a hard constraint. The Energy Guard imposes an upper bound on mass growth: $\Delta M_k = M_{\text{eff}}^{(k)} - M_{\text{eff}}^{(k-1)} \le \delta_{\max} \sqrt{\beta}$. If the raw AA coefficients would cause $\Delta M_k$ to exceed this limit, the algorithm dampens the coefficients by a scaling factor $\rho$. This clamping ensures that $M_{\text{eff}}$ remains bounded. Consequently, the penalty term is strictly bounded. By ensuring the penalty remains a higher-order perturbation, EG-AA preserves the dominance of the first-order contraction, guaranteeing stable convergence.
\end{itemize}
\end{proof}

\subsection{Implications for Convergence Rate}

The mass control mechanism in EG-AA has direct implications for the convergence rate. While standard AA may achieve a very small $\delta_k$ in theory, the unbounded mass growth makes the nonlinear penalty dominant, potentially leading to divergence or severe oscillations. EG-AA, by contrast, ensures:

1. \textbf{Stable Contraction:} $\mathcal{R}_{\text{realized}} < 1$ is guaranteed, ensuring monotonic convergence in gradient norm.

2. \textbf{Adaptive Acceleration:} In well-conditioned regions where moderate mass suffices for good projection, EG-AA behaves like standard AA, achieving rapid convergence.

3. \textbf{Robust Stabilization:} In ill-conditioned regions, the Energy Guard prevents catastrophic mass growth, falling back to a more conservative but stable update.

This unified perspective explains why the ``Inertial Governor" ($\delta_{\max}$) is necessary to realize the theoretical acceleration potential in non-quadratic landscapes. By explicitly linking the algebraic projection gain $\delta_k$ to the physical inertia $M_{\text{eff}}$, EG-AA provides a principled way to balance acceleration with stability.
\section{Numerical Experiments}\label{sec:numerical}

In this section, we validate the theoretical findings of Energy-Guarded Anderson Acceleration (EG-AA) through controlled numerical experiments. We focus on several ill-conditioned problems, which serve as standard benchmarks for assessing the stability of acceleration algorithms in stiff regimes. The experiments are designed to explicitly demonstrate how EG-AA manages the critical trade-off between the aggressive acceleration characteristic of standard Anderson mixing and the robustness required by high-curvature landscapes. 

\subsection{Parameter Selection Strategy}

The performance of EG-AA relies on the appropriate selection of three key hyperparameters: the step size $\beta$, the mass growth limit $\delta_{\max}$, and the Hessian damping factor $\eta$. Based on our theoretical analysis and extensive numerical testing, we provide the following guidelines for tuning these parameters to balance acceleration with thermodynamic stability.


\textbf{Tuning Strategy for step size $\beta$}
\begin{itemize}
    \item Theoretical baseline: $\beta \le 1/L$ for $L$-smooth functions.
    \item EG-AA allows more aggressive values than standard Nesterov acceleration.
    \item Use backtracking line search if the Lipschitz constant $L$ is unknown.
\end{itemize}

\textbf{Tuning Strategy for Mass Growth Limit ($\delta_{\max}$)}
\begin{itemize}
    \item Controls the transition between Gradient Descent and Anderson Acceleration.
    \item $\delta_{\max} \approx 2.0$ mimics Nesterov's optimal schedule, providing a robust balance for general convex problems.
    \item $\delta_{\max} \to 0$: reduces to Gradient Descent behavior (unconditionally stable but slow).
    \item $\delta_{\max} \to \infty$: recovers standard Anderson Acceleration (fastest convergence but highest risk of instability).
\end{itemize}

\textbf{Tuning Strategy for Hessian Damping Factor ($\eta$)}
\begin{itemize}
    \item Suppresses high-frequency oscillations in stiff directions.
    \item Use $\eta = 0$ for well-conditioned or moderately stiff problems to maximize acceleration.
    \item Use $\eta > 0$ for highly ill-conditioned problems ($\kappa > 10^3$) or when residual oscillations are observed.
    \item Avoid $\eta > 1.0$, as it may overdamp the system and slow down convergence.
\end{itemize}

In summary, EG-AA offers a principled hierarchy for parameter tuning: first, fix $\beta$ based on the Lipschitz constant or a standard heuristic; second, set $\delta_{\max} \approx 2.0$ to ensure inertial stability; and finally, activate $\eta > 0$ only if specific high-curvature oscillations are observed.

\subsection{Example 1: The Non-convex Rosenbrock Function}

To evaluate the algorithm's robustness on non-convex landscapes with nonlinear curvature, we test it on the 2D Rosenbrock function (also known as the "Banana function"). This problem is characterized by a global minimum located inside a long, narrow, parabolic valley, which typically causes standard gradient methods to oscillate or crawl slowly.

\paragraph{Problem Definition}
The objective function is defined as:
\begin{equation}
    f(\mathbf{x}) = (1 - x_1)^2 + a (x_2 - x_1^2)^2, \quad \mathbf{x} \in \R^2.
\end{equation}
We use the scaling coefficient $a \in \{1, 20, 100\}$ to create valleys of varying width and treachery. The global minimum is at $\mathbf{x}^* = (1, 1)$ with $f(\mathbf{x}^*) = 0$.

\paragraph{Experimental Setup}
We initialize the optimization at $\mathbf{x}_0 = [-1.5, 1.5]^\T$, a point that requires the trajectory to traverse the "elbow" of the parabolic valley.
To rigorously test stability, we employ an \textbf{aggressive step size} regime:
\begin{itemize}
    \item \textbf{Step Size:} For the stiffest case ($a=100$), we set $\beta = 1 \times 10^{-5}$ for both AA and EG-AA, and $\beta = 1 \times 10^{-3}$ for the Nesterov method to ensure fair comparison near their respective stability limits.
    \item \textbf{Parameters:} For EG-AA, we set the memory depth $m=2$, mass growth limit $\delta_{max}=50.0$, and Hessian damping factor $\eta=0.05$.
\end{itemize}

\begin{figure}[htbp!]
    \centering
    \makebox[\textwidth][c]{\includegraphics[width=1.15\textwidth]{./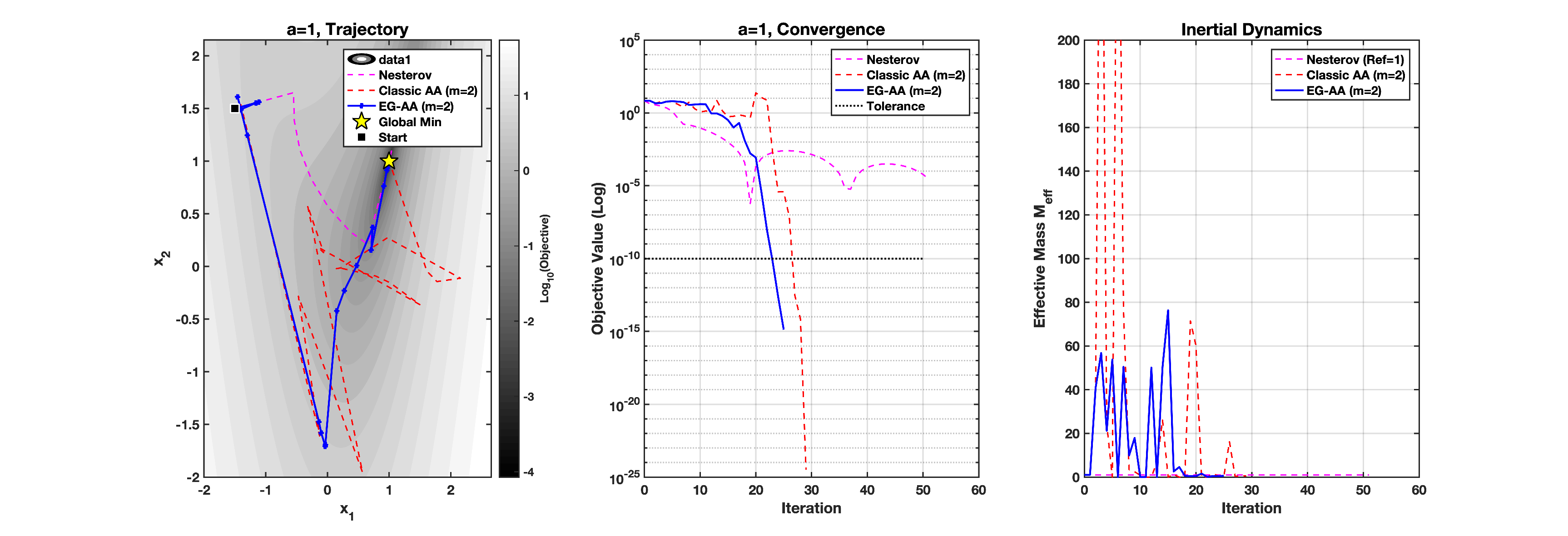}}
    \vspace{0.35cm}
    \makebox[\textwidth][c]{\includegraphics[width=1.15\textwidth]{./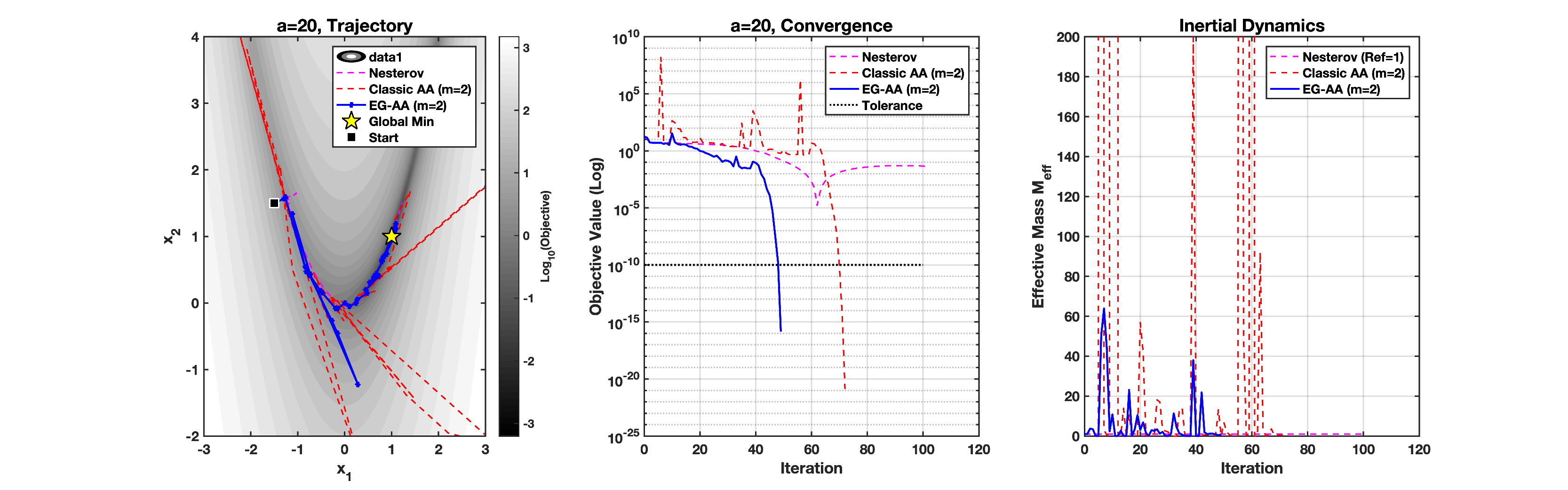}}
    \vspace{0.35cm}
    \makebox[\textwidth][c]{\includegraphics[width=1.15\textwidth]{./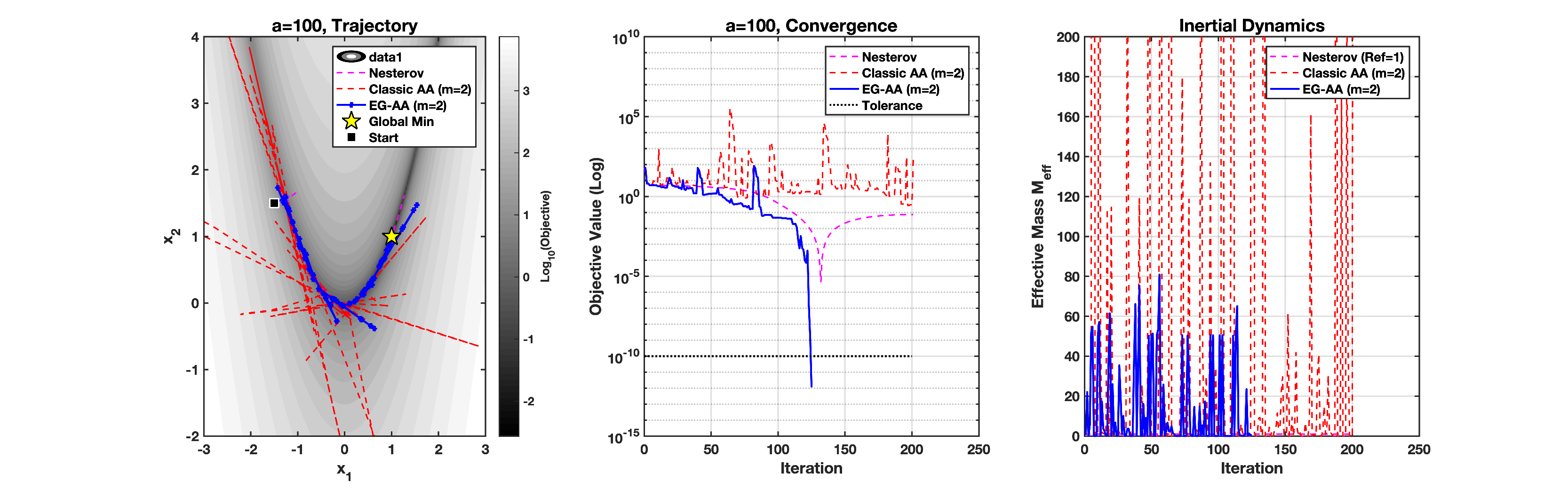}}
    \caption{\textbf{(Left)} Optimization trajectories on the Rosenbrock contour map. EG-AA (Blue) navigates the curved valley efficiently, while Classic AA (Red) exhibits wider oscillations. Nesterov (Pink) is slower to turn the corner. \textbf{(Center)} Convergence of gradient norm. EG-AA (Blue) achieves faster convergence than Nesterov (Pink) and is significantly more stable than Standard AA (Red). \textbf{(Right)} Effective Mass dynamics. EG-AA clamps the exploding mass of Standard AA to prevent instability.}
    \label{fig:rosenbrock}
\end{figure}

\paragraph{Results}
The results are summarized in Figure \ref{fig:rosenbrock}.
\begin{enumerate}
    \item \textbf{Convergence Speed:} EG-AA demonstrates remarkable efficiency across all tested $a$ values, consistently outperforming the baselines.
    \item \textbf{Trajectory Stability:} The trajectory plots (Fig. \ref{fig:rosenbrock}, Left) reveal that Standard AA tends to overshoot the valley walls due to excessive momentum accumulation in high-curvature regions. In contrast, EG-AA, protected by the Energy Guard mechanism, maintains a tighter trajectory along the valley floor, effectively adapting its inertial mass to the changing curvature without losing stability.
\end{enumerate}

\subsection{Example 2: Sparse Logistic Regression}

We consider binary logistic regression with $L_2$ regularization. The objective function is given by:
\begin{equation}
    \min_{x \in \mathbb{R}^d} f(x) = \frac{1}{M} \sum_{i=1}^{M} \log\left(1 + \exp(-y_i a_i^\top x)\right) + \frac{\mu}{2} \|x\|^2,
\end{equation}
where $a_i \in \mathbb{R}^d$ are the feature vectors, $y_i \in \{-1, 1\}$ are the labels, and $\mu > 0$ is the regularization parameter.

This example primarily serves to demonstrate the recoverability of our algorithm. By setting a sufficiently large mass limit ($\delta_{max}$) and a negligible Hessian damping factor ($\eta$), EG-AA should theoretically recover the behavior of standard AA. As shown in Figure \ref{fig:Logistic}, when $\delta_{max}=10^{12}$ and $\eta=10^{-12}$, the trajectories of EG-AA and AA are virtually identical. This result empirically confirms that EG-AA is no worse than standard AA in regimes where the standard method is stable. Moreover, when $\delta_{max}$ approaches zero, EG-AA is reduced to GD method.
\begin{figure}[htbp]
    \centering
    \makebox[\textwidth][c]{\includegraphics[width=0.82\textwidth]{./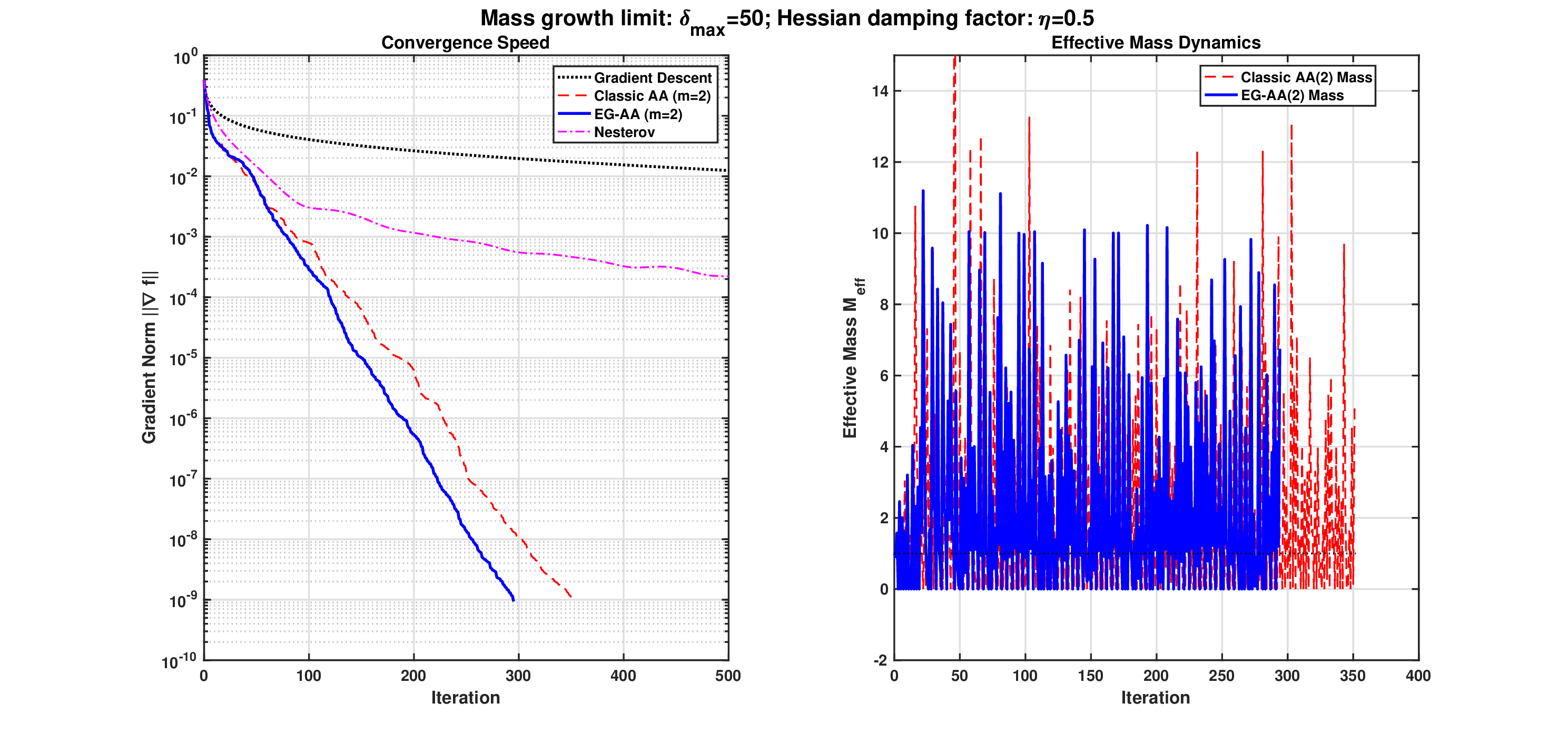}}
    \makebox[\textwidth][c]{\includegraphics[width=0.82\textwidth]{./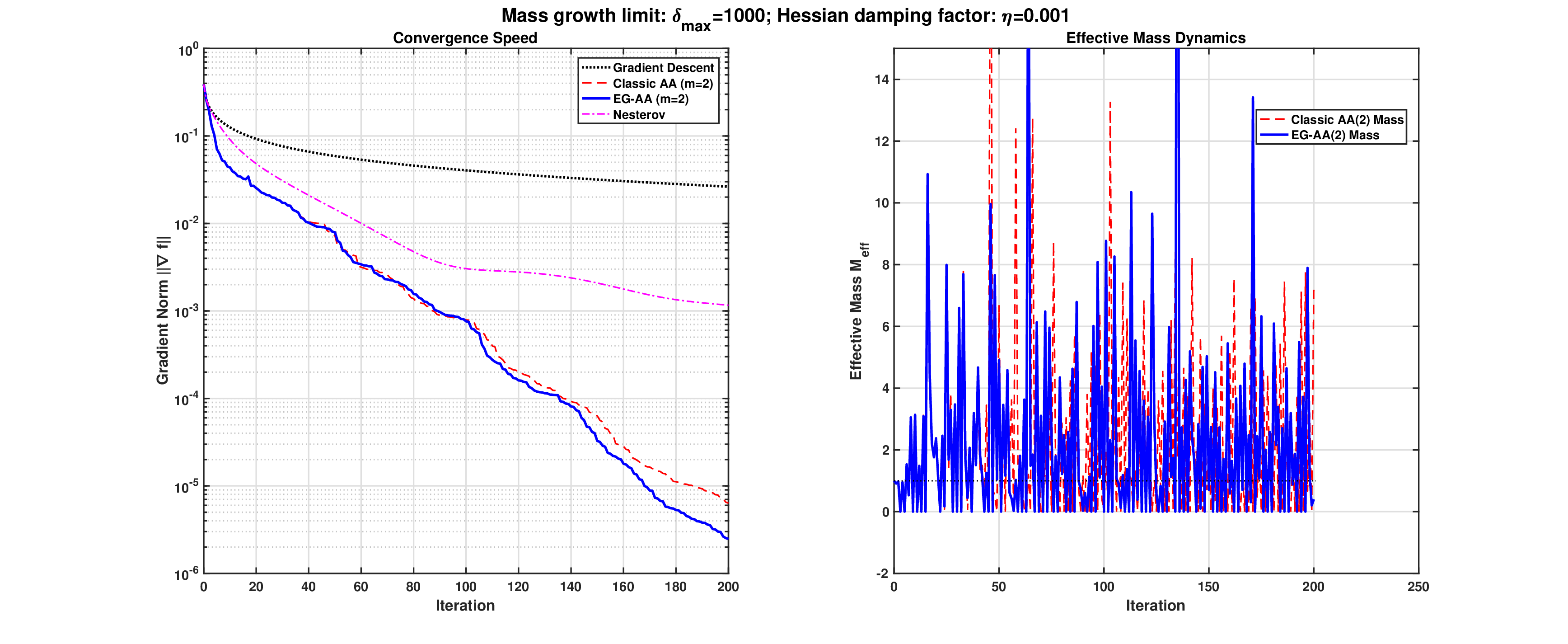}}
    \makebox[\textwidth][c]{\includegraphics[width=0.82\textwidth]{./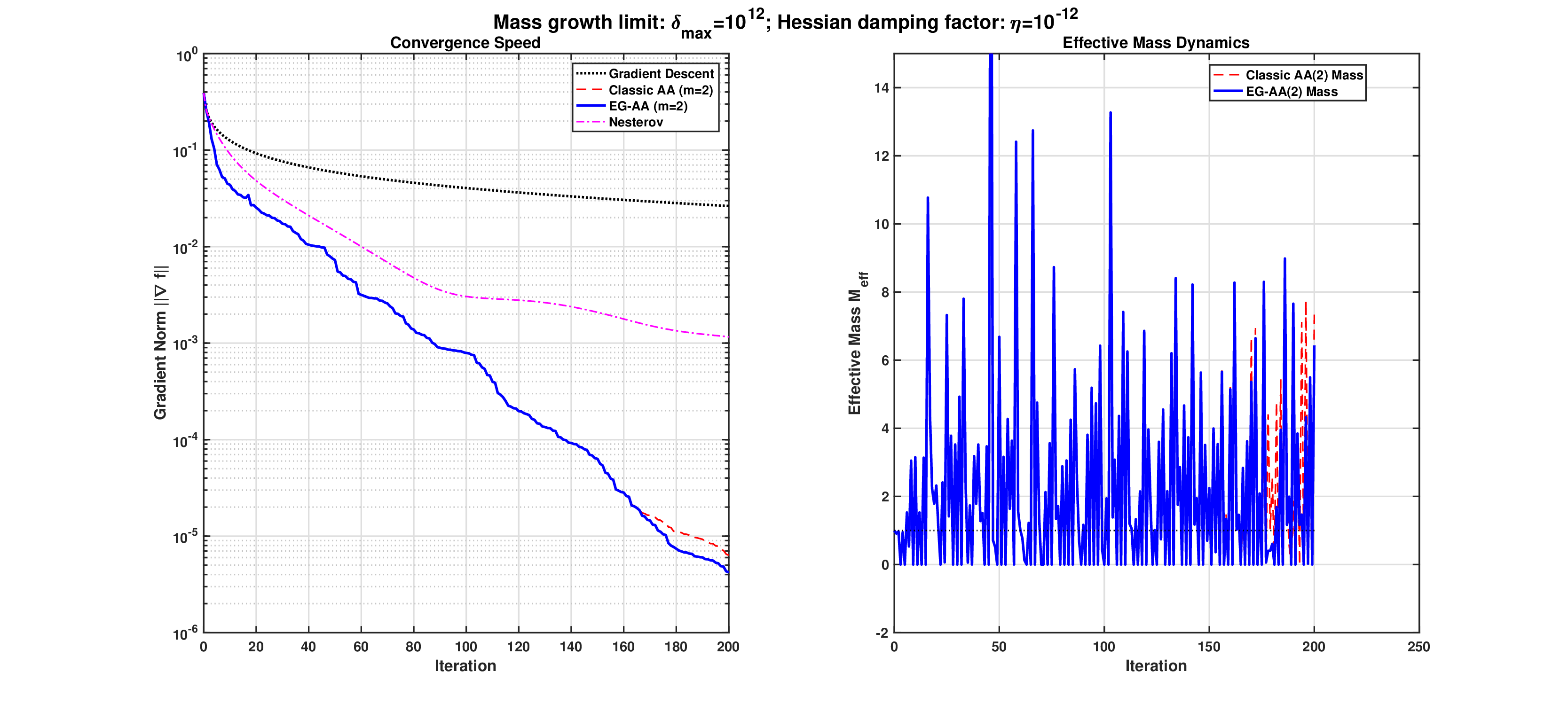}}
    \makebox[\textwidth][c]{\includegraphics[width=0.82\textwidth]{./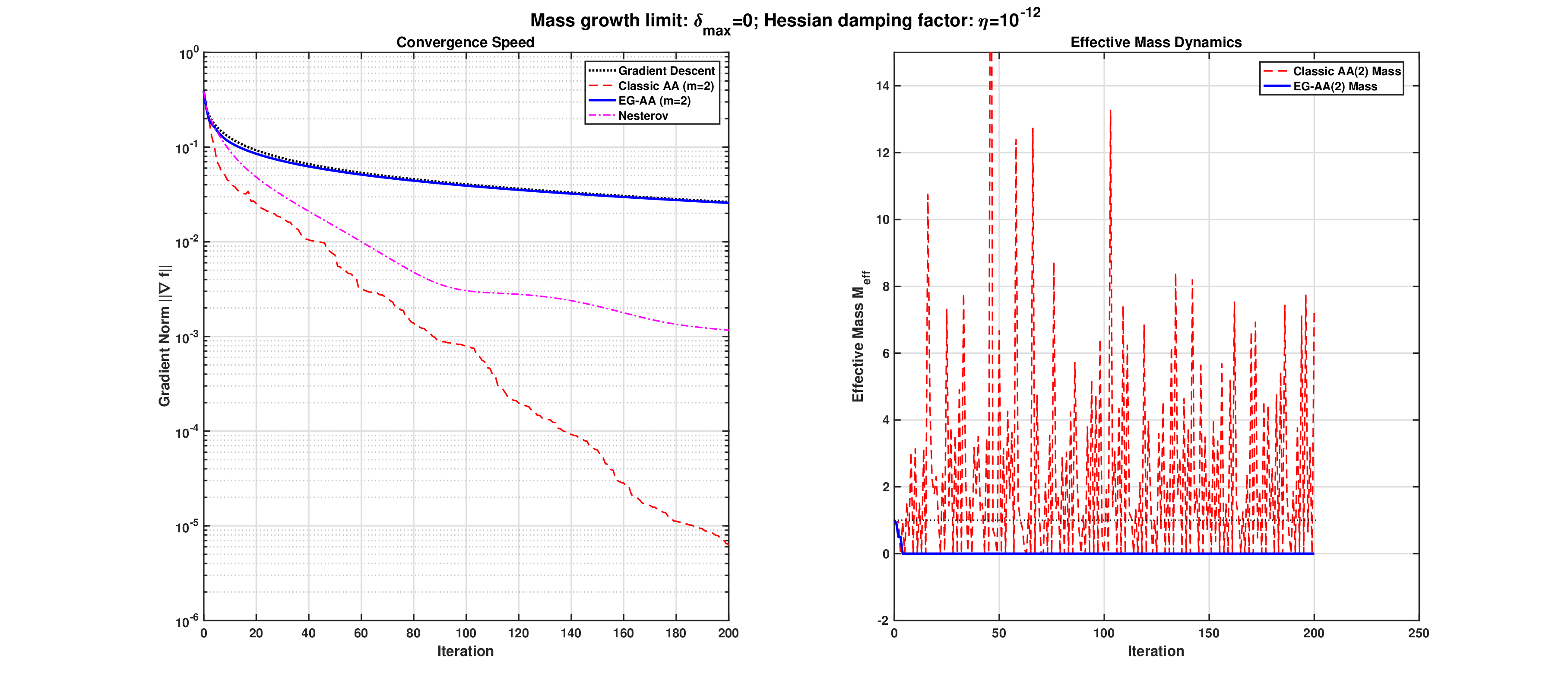}}
    \caption{\textbf{(Left)} Convergence of gradient norm. \textbf{(Right)} Effective Mass dynamics. When constraints are relaxed ($\delta_{max} \to \infty, \eta \to 0$), EG-AA (Blue) recovers the behavior of Standard AA (Red); when when $\delta_{max}$ approaches zero, EG-AA recovers GD method.}
    \label{fig:Logistic}
\end{figure}


\subsection{Example 3: Ill-Conditioned Non-negative Least Squares}

To rigorously evaluate robustness in constrained settings, we construct a hard instance of the Non-negative Least Squares (NNLS) problem. This setup is specifically designed to be ill-conditioned and sparse, challenging standard acceleration techniques that rely on smooth historical gradients.

\paragraph{Problem Formulation}
The objective function incorporates an $L_2$-regularized least squares loss with non-negativity constraints:
\begin{equation}
    \min_{x \in \mathbb{R}^n} f(x) = \frac{1}{2M} \|Ax - b\|_2^2 + \mu \|x\|_2^2, \quad \text{s.t.} \quad x \ge 0.
\end{equation}

\paragraph{Data Generation Protocol}
The problem parameters are synthesized to simulate a high-dimensional, noisy, and numerically stiff environment:
\begin{itemize}
    \item \textbf{Dimensions:} We set the number of samples $M = 2000$ and the feature dimension $n = 500$.
    \item \textbf{Ill-Conditioned Design Matrix:} The matrix $A$ is constructed via Singular Value Decomposition (SVD) $A = U \Sigma V^\top$. To induce stiffness, the singular values are logarithmically spaced in the interval $[10^{-4}, 1]$ and scaled by $\sqrt{M}$. This scaling ensures that the data term $\frac{1}{M}A^\top A$ has a unit spectral norm while maintaining a high condition number $\kappa \approx 10^4$.
    \item \textbf{Sparse Ground Truth:} The true solution $x_{true}$ is generated with extreme sparsity ($p=0.5\%$), resulting in approximately 2-3 active non-zero elements drawn uniformly from $[1, 2]$. This sparsity forces the optimization trajectory to frequently interact with the non-smooth boundaries ($x_i = 0$).
    \item \textbf{Observation Noise:} The target vector is generated as $b = A x_{true} + \epsilon$, with Gaussian noise $\epsilon \sim \mathcal{N}(0, 0.5^2)$.
    \item \textbf{Weak Regularization:} We choose small regularization parameters $\mu = 5 \times 10^{-4}, \mu = 5 \times 10^{-5} and \mu = 5 \times 10^{-6}$. Unlike larger values (e.g., $\mu=0.1$) that would artificially improve the condition number of the Hessian ($H \approx \frac{1}{M}A^\top A + 2\mu I$), this small $\mu$ preserves the intrinsic ill-conditioning of the data term.
\end{itemize}

\paragraph{Algorithm Configuration}
The Lipschitz constant of the gradient is estimated as $L \approx \lambda_{\max}(\frac{1}{M}A^\top A) + 2\mu \approx 1$. All algorithms use a conservative step size $\beta = 1/L$. The window size for both AA and EG-AA is fixed as $m=3$. We use a fixed mass growth limit $\delta_{max}=20$ and fixed Hessian damping factor $\eta=0.02$ in EG-AA for all cases. All solvers are initialized at the origin $x_0 = \mathbf{0}$ and executed for a maximum of 5000 iterations to observe long-term stability. 

\paragraph{Results}

As shown in Figure \ref{fig:NNLS}, for this hard ill-conditioned non-negative least squares problem, EG-AA also outperforms Standard AA and Nesterov method.

\begin{figure}[htbp!]
    \centering
    \makebox[\textwidth][c]{\includegraphics[width=1.0\textwidth]{./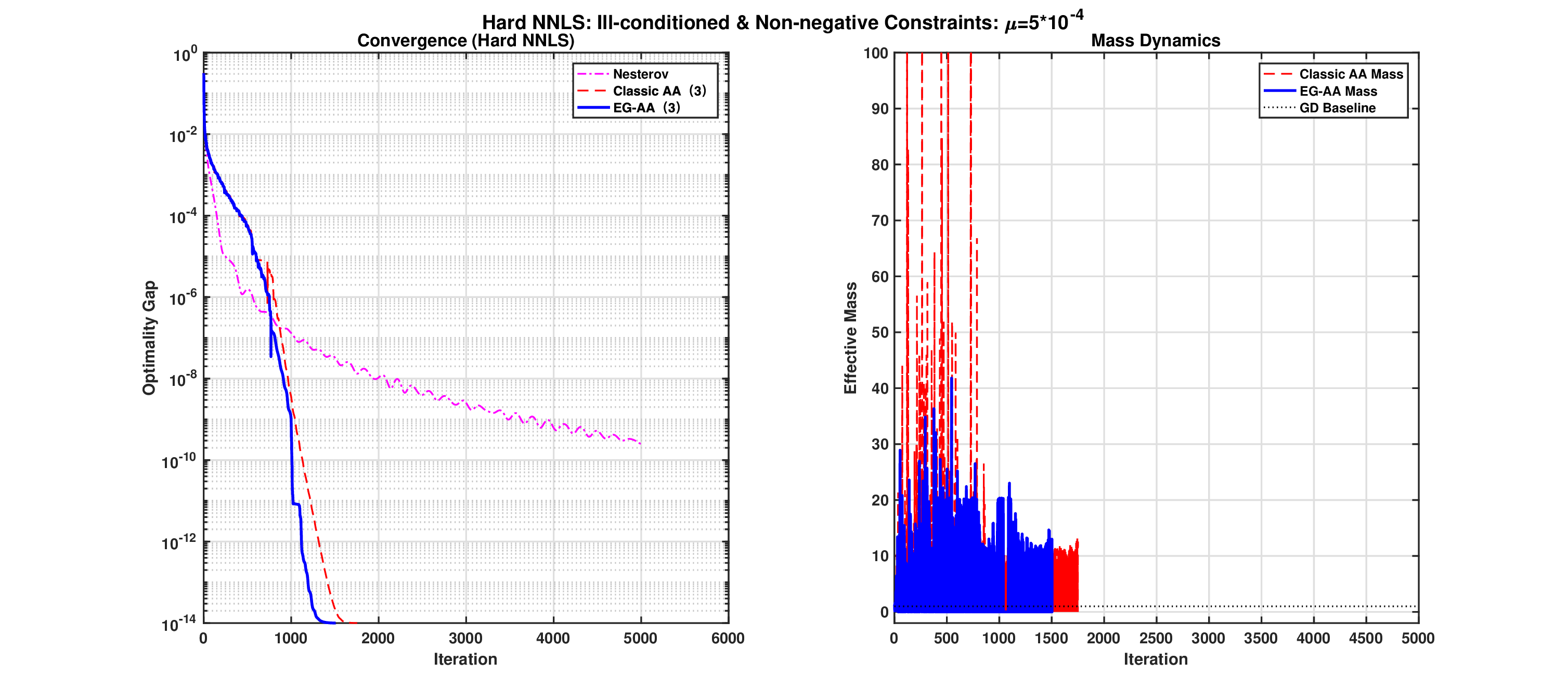}} 
    \makebox[\textwidth][c]{\includegraphics[width=1.0\textwidth]{./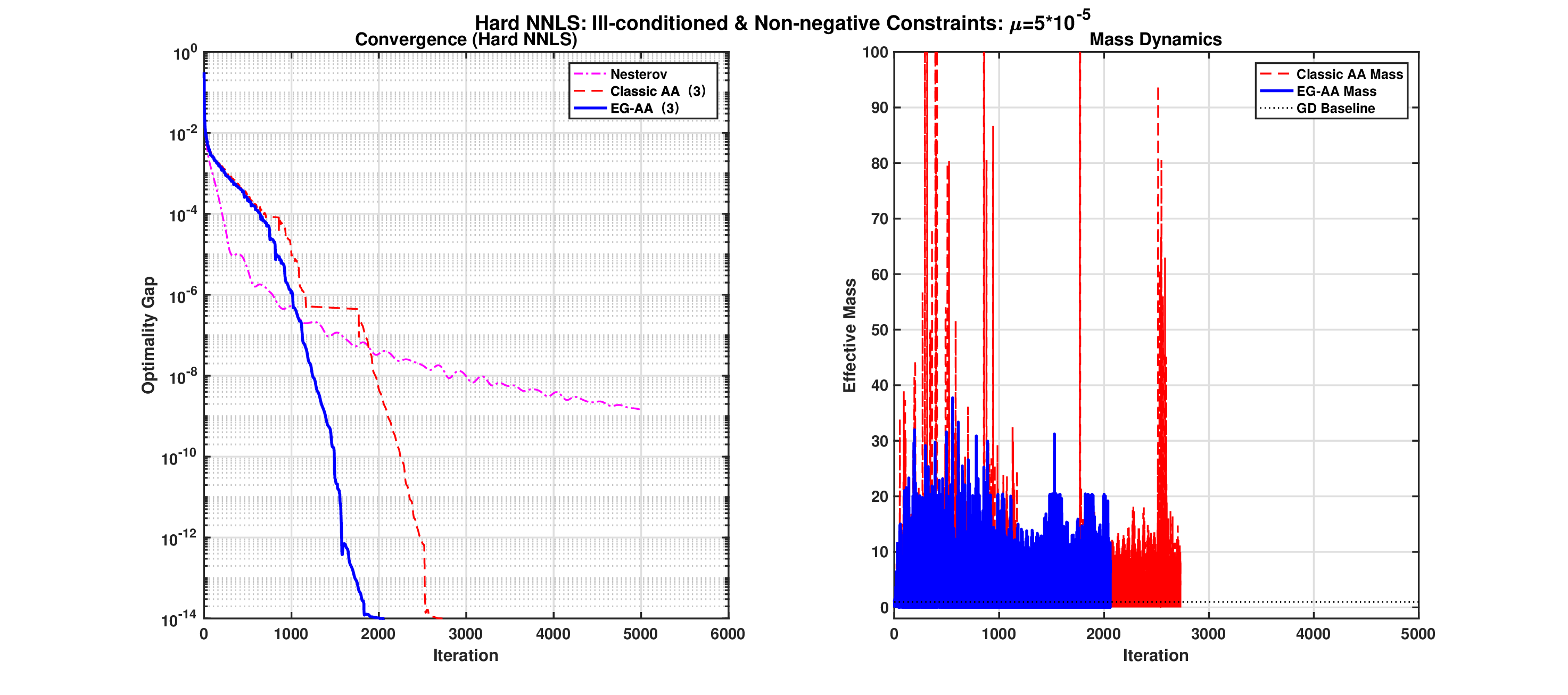}} 
    \makebox[\textwidth][c]{\includegraphics[width=1.0\textwidth]{./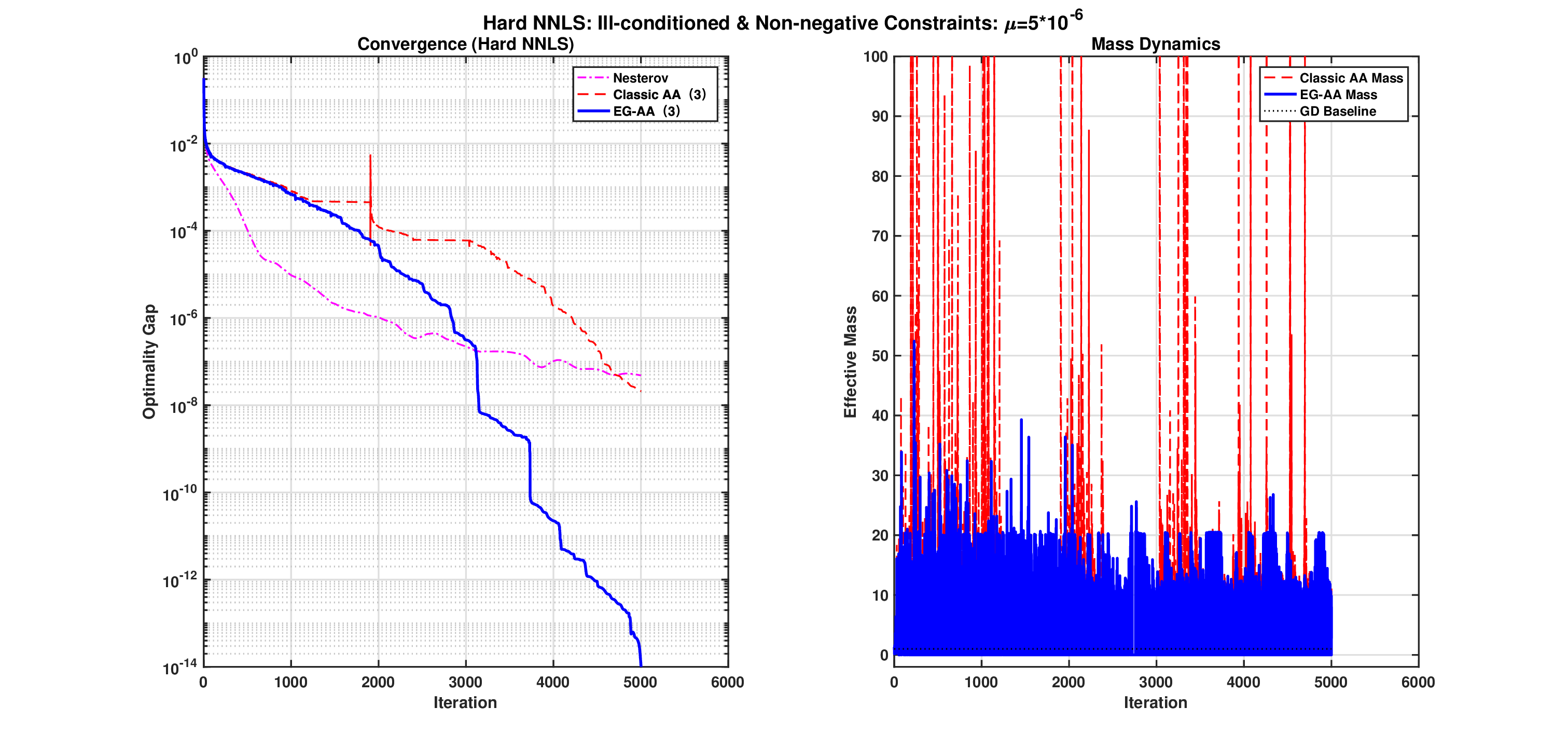}} 
    \caption{Comparison of optimization algorithms on an ill-conditioned non-negative least squares problem. \textbf{(Left)} Convergence results showing EG-AA's superior rate. \textbf{(Right)} Effective mass evolution. As the regularization parameter $\mu$ becomes smaller, EG-AA gradually outperforms Standard AA.}
    \label{fig:NNLS}
\end{figure}


\subsection{Example 4: Ill-Conditioned Quadratic Problem}

Finally, we consider the unconstrained minimization of a quadratic objective function:
\begin{equation}
    f(x) = \frac{1}{2} x^\T A x, \quad x \in \R^n.
\end{equation}
The problem instance is synthesized to simulate a \textbf{stiff} optimization landscape, characterized by the following specifications:
\begin{itemize}
    \item \textbf{Dimension:} $n = 100$.
    \item \textbf{Condition Number:} $\kappa = \frac{\lambda_{\max}(A)}{\lambda_{\min}(A)} \in \{50, 500, 5000, 50000\}$. This high condition number implies the existence of extremely narrow valleys, a scenario where standard momentum methods often exhibit oscillatory behavior.
    \item \textbf{Spectrum:} The matrix $A$ is diagonal with eigenvalues uniformly spaced in the interval $[1, \kappa]$.
    \item \textbf{Lipschitz Constant:} $L = \lambda_{\max} = \kappa$.
\end{itemize}

\paragraph{Algorithm Configuration}
We benchmark our proposed EG-AA against three baseline algorithms: Gradient Descent (GD), Nesterov Accelerated Gradient (NAG), and Standard Anderson Acceleration (AA).
\begin{enumerate}
    \item \textbf{Baselines (GD \& NAG):} We utilize the standard conservative step size $\beta_{GD} = \beta_{NAG} = 1.0/L$ to guarantee stability.
    \item \textbf{Anderson Methods (AA \& EG-AA):} To rigorously test robustness, we employ an \textbf{aggressive step size} $\beta_{AA} = 1.8/L$. It is worth noting that for standard gradient descent, a step size approaching $2/L$ represents the theoretical stability limit and typically results in oscillatory divergence in practice.
    \item \textbf{EG-AA Specifics:} Unless otherwise stated, we set the memory depth $m=3$, the mass growth limit $\delta_{max} = 5.0$, and the Hessian damping factor $\eta = 0.1$.
    \item \textbf{Initialization:} The initialization $x_0$ is drawn from a standard normal distribution $\mathcal{N}(0, I_n)$.
\end{enumerate}

\paragraph{Convergence Analysis and Discussion}

The convergence trajectories, effective mass dynamics, and acceleration gain factors are visualized in Figure \ref{fig:quadratic_comparison}. As illustrated in Figure \ref{fig:quadratic_comparison} (Left), standard Gradient Descent (GD) suffers from severe stagnation due to the ill-conditioned spectrum. \textbf{EG-AA} consistently outperforms standard AA and Nesterov acceleration as the condition number $\kappa$ increases from 50 to 50,000. 

Moreover, as shown in Figure \ref{fig:quadratic_comparison} (Center), EG-AA effectively manages the system's inertia. When standard AA is stable, EG-AA allows for mass accumulation (energy injection) to further speed up convergence. However, unlike standard AA, it prevents unbounded mass growth that leads to instability. Lastly, the sensitivity analysis regarding the Hessian damping factor $\eta$ and the memory depth $m$ is presented in Figure \ref{fig:quadratic_comparison_parameter}, demonstrating the algorithm's robustness to parameter variations.

\begin{figure}[htbp!]
    \centering
    \makebox[\textwidth][c]{\includegraphics[width=1.0\textwidth]{./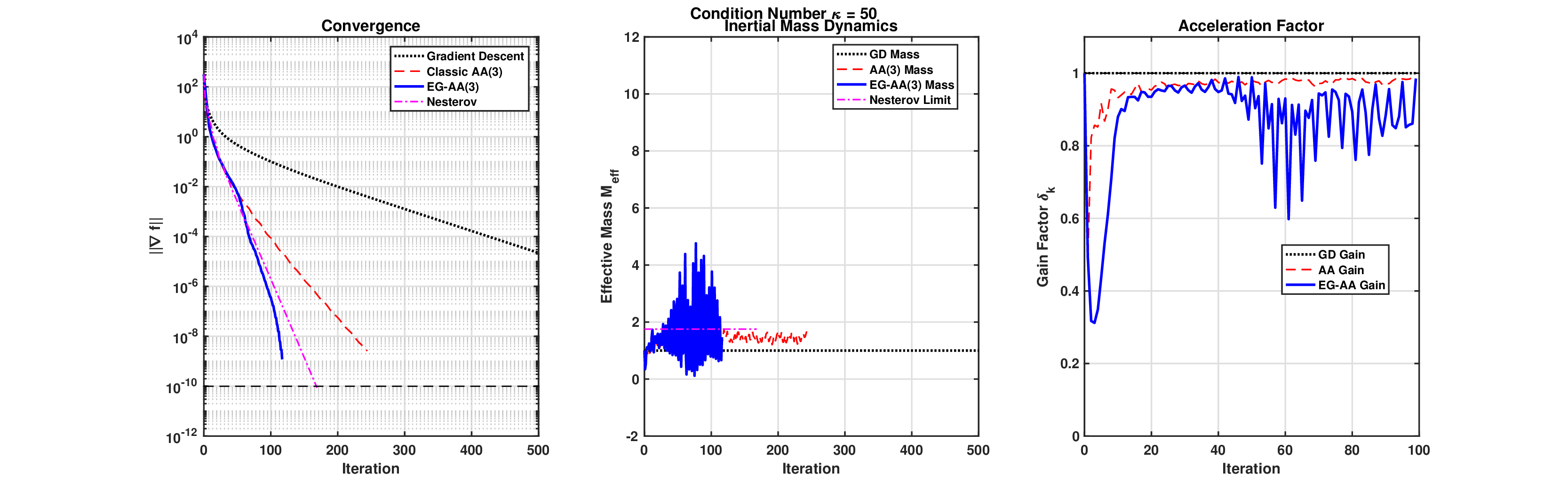}} 
    \vspace{0.2cm}
    \makebox[\textwidth][c]{\includegraphics[width=1.0\textwidth]{./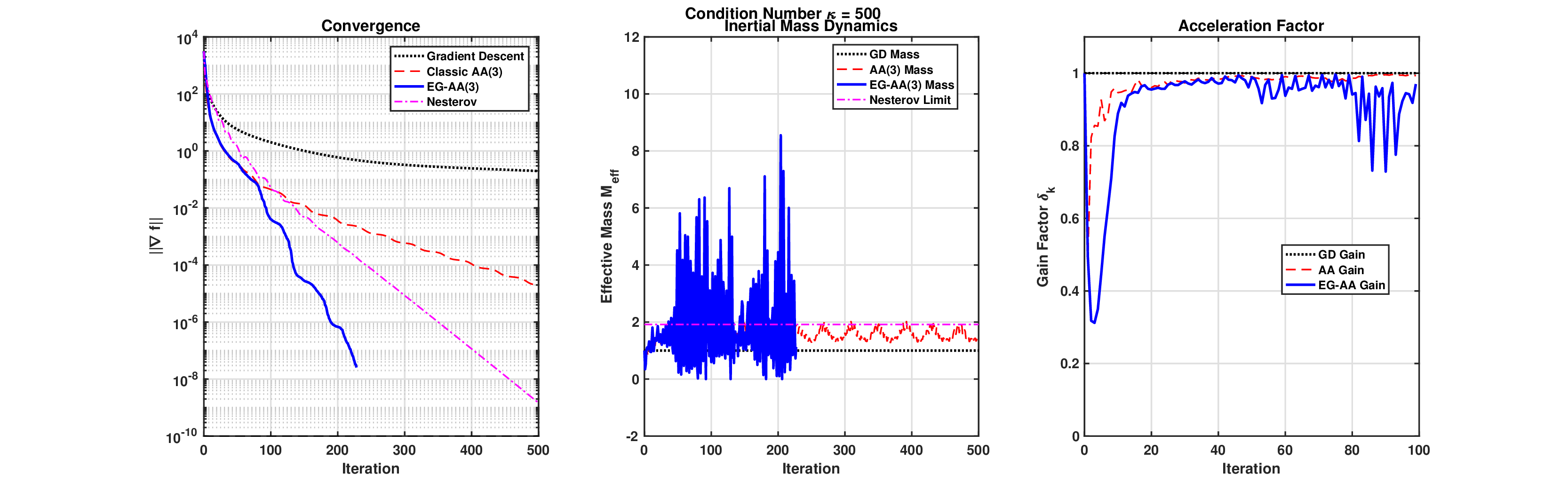}} 
    \vspace{0.2cm}
    \makebox[\textwidth][c]{\includegraphics[width=1.0\textwidth]{./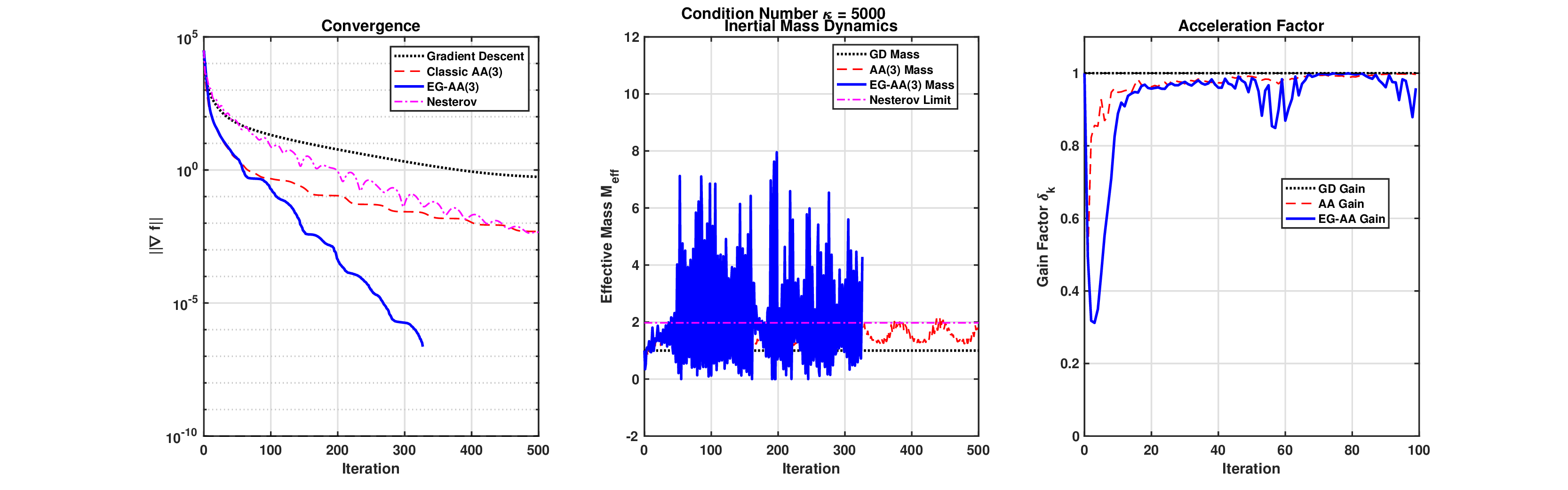}} 
    \vspace{0.2cm}
    \makebox[\textwidth][c]{\includegraphics[width=1.0\textwidth]{./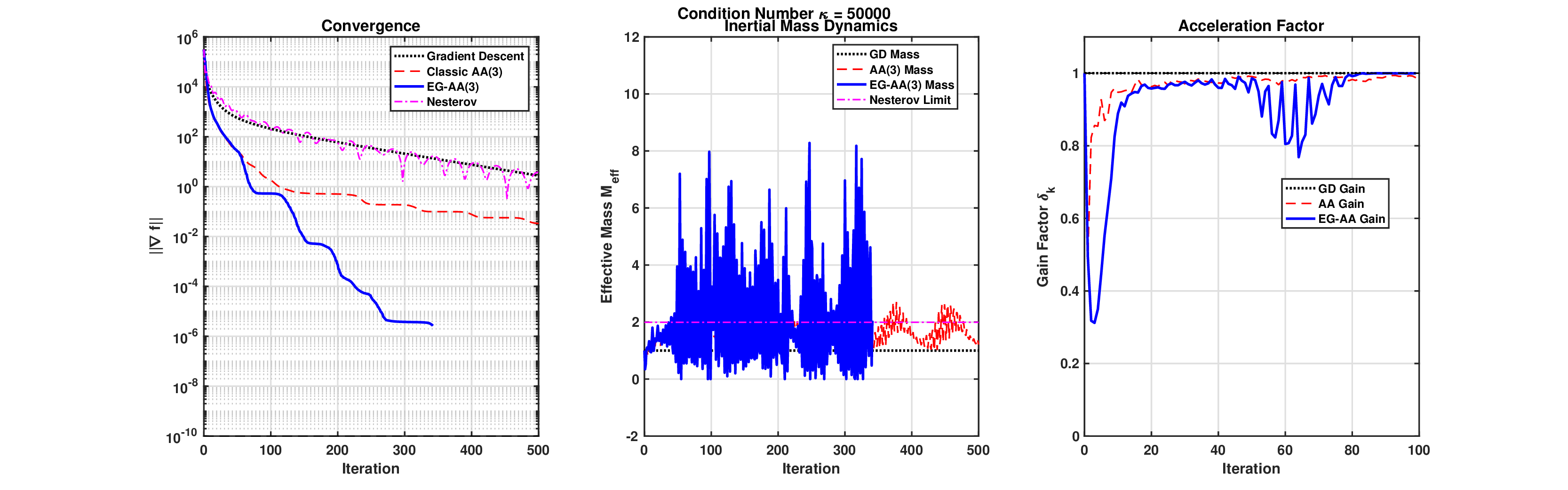}} 
    \caption{Comparison of optimization algorithms on an ill-conditioned quadratic problem with varying condition numbers ($\kappa \in \{50, 500, 5000, 50,000\}$). \textbf{(Left)} Convergence of gradient norm. EG-AA (Blue) achieves faster convergence than Nesterov (Pink) and is significantly more stable than Standard AA (Red). \textbf{(Center)} Effective Mass dynamics. EG-AA intelligently modulates mass to accelerate convergence when stable. \textbf{(Right)} Acceleration Gain Factor $\delta_k$. Lower values indicate stronger geometric contraction.}
    \label{fig:quadratic_comparison}
\end{figure}

\begin{figure}[htbp!]
    \centering
    \makebox[\textwidth][c]{\includegraphics[width=1.0\textwidth]{./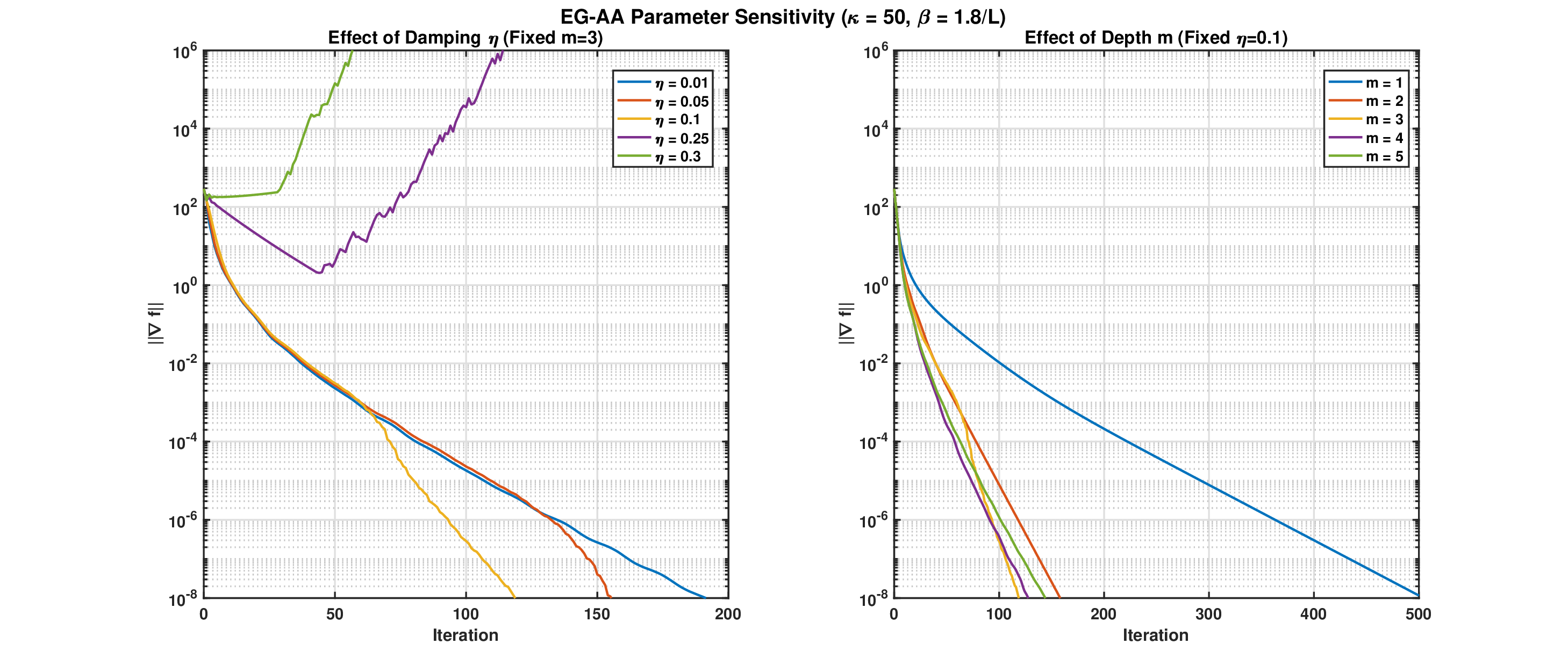}} 
    \vspace{0.2cm}
    \makebox[\textwidth][c]{\includegraphics[width=1.0\textwidth]{./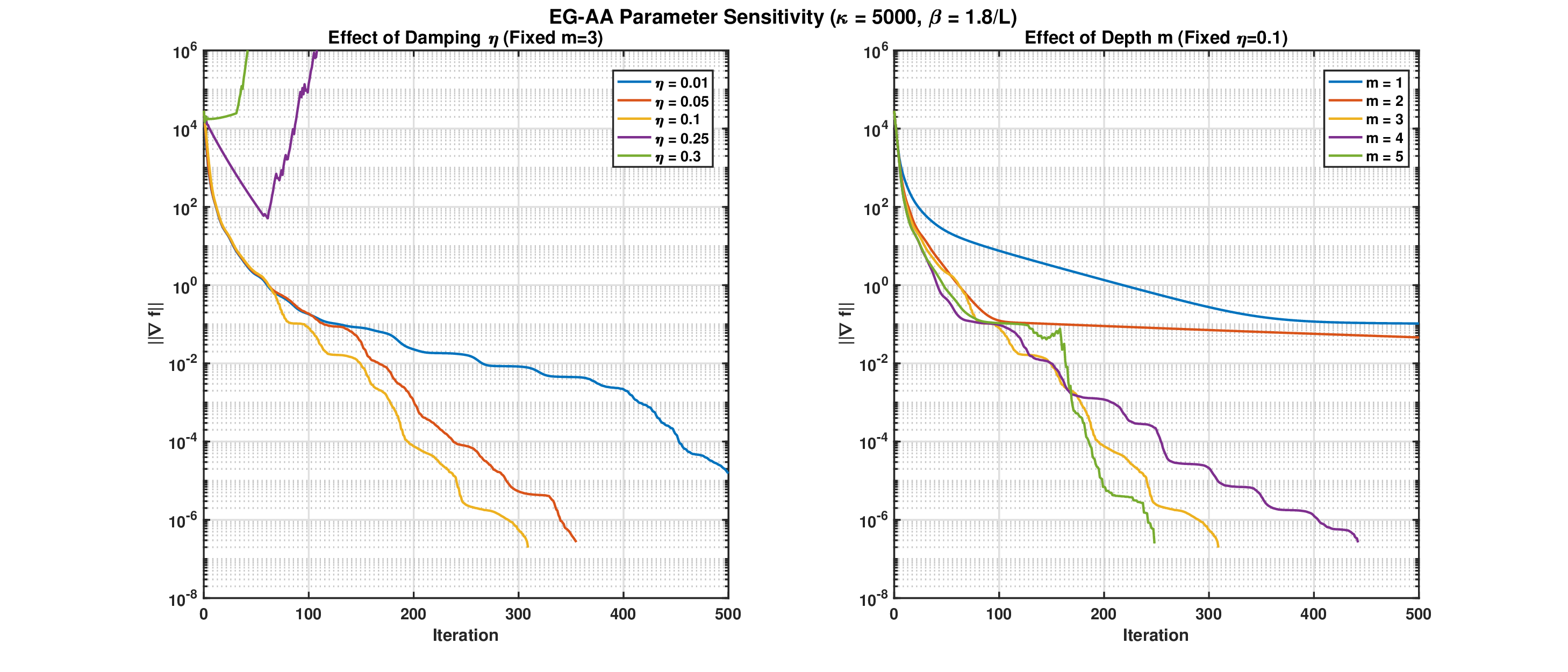}} 
    \caption{Parameter sensitivity analysis on an ill-conditioned quadratic problem ($\kappa \in \{50, 5000\}$). \textbf{(Left)} The effect of the Hessian damping factor $\eta$. \textbf{(Right)} The effect of the memory depth $m$.}
    \label{fig:quadratic_comparison_parameter}
\end{figure}

\section{Conclusion}

In this paper, we have bridged the gap between discrete Anderson Acceleration (AA) and continuous-time inertial dynamics, offering a unified theoretical framework for understanding and enhancing acceleration methods. By rigorously transforming AA into an adaptive multi-step momentum scheme as in \cite{Attouch2020, Xie2025}, we revealed that its continuous limit corresponds to a second-order ODE with \textbf{variable effective mass}! This physical perspective uncovers the fundamental mechanism of instability in standard AA: the unchecked growth of effective mass acts as a form of ``negative damping,'' injecting energy into the system and violating passivity constraints in stiff regimes.

Conversely, our high-resolution analysis reveals an implicit Hessian-driven damping term inherent to AA, which provides critical stabilization in high-curvature directions~\cite{Shi2018}. Building on these insights, we introduce the \textbf{Energy-Guarded Anderson Acceleration (EG-AA)}. Functioning as an inertial governor that explicitly bounds mass growth and enforces energy dissipation, EG-AA effectively resolves the stability-efficiency trade-off. Our convergence analysis, based on the Acceleration Gain Factor, proves that EG-AA maintains the rapid geometric contraction of standard AA in well-conditioned subspaces while suppressing nonlinear error amplification. Numerical experiments on ill-conditioned convex composite problems confirm that EG-AA achieves superior stability and convergence rates compared to standard AA and Nesterov acceleration, validating the practical utility of our thermodynamic framework. Future work will extend this energy-based perspective to non-smooth and non-convex regimes, and specifically to the ADMM algorithm.

\section*{Data Availability Statement}

All data and code supporting the findings of this study are available at the GitHub repository: 

\texttt{https://github.com/kewangchen/EG-AA}. 

The repository will be made publicly available upon manuscript acceptance. During the review process, it can be accessed by request from the corresponding author.

\section*{Conflicts of Interest:}
The authors declare that they have no known competing financial interests or personal relationships that could have appeared to influence the work reported in this paper.
\newpage
\bibliographystyle{siamplain}

\appendix
\section{Derivation of the ODE Limit from the Discrete Nesterov Scheme}

We start with the discrete update equation for Nesterov's Accelerated Gradient method after eliminating the auxiliary variable $y^k$:
\begin{equation} \label{eq:discrete_nesterov}
    x^{k+1} - x^k = \frac{k-1}{k+2}(x^k - x^{k-1}) - h \nabla f(y^k).
\end{equation}

\noindent \textbf{Step 1: Continuous Time Ansatz} \\
Let $h$ be the step size. We introduce the continuous time variable $t = k\sqrt{h}$. Thus, the discrete iteration count relates to time as $k = t/\sqrt{h}$. We assume there exists a smooth trajectory $x(t)$ such that $x^k \approx x(t)$ as $h \to 0$. Specifically:
\begin{align*}
    x^k     &= x(t), \\
    x^{k+1} &= x(t + \sqrt{h}), \\
    x^{k-1} &= x(t - \sqrt{h}).
\end{align*}

\noindent \textbf{Step 2: Taylor Expansions} \\
We apply the Taylor series expansion to $x(t \pm \sqrt{h})$ around $t$ up to the second order, treating $\sqrt{h}$ as the small perturbation:
\begin{align}
    x^{k+1} &= x(t) + \sqrt{h}\dot{x}(t) + \frac{h}{2}\ddot{x}(t) + \mathcal{O}(h^{3/2}), \label{eq:taylor_forward} \\
    x^{k-1} &= x(t) - \sqrt{h}\dot{x}(t) + \frac{h}{2}\ddot{x}(t) + \mathcal{O}(h^{3/2}). \label{eq:taylor_backward}
\end{align}
Using \eqref{eq:taylor_forward} and \eqref{eq:taylor_backward}, we can approximate the finite difference terms in \eqref{eq:discrete_nesterov}:
\begin{align}
    \text{LHS: } \quad x^{k+1} - x^k &= \sqrt{h}\dot{x}(t) + \frac{h}{2}\ddot{x}(t) + \mathcal{O}(h^{3/2}), \label{eq:LHS} \\
    \text{Momentum: } \quad x^k - x^{k-1} &= \sqrt{h}\dot{x}(t) - \frac{h}{2}\ddot{x}(t) + \mathcal{O}(h^{3/2}). \label{eq:momentum_term}
\end{align}

\noindent \textbf{Step 3: Asymptotic Expansion of the Momentum Coefficient} \\
The coefficient $\frac{k-1}{k+2}$ must be expressed in terms of $t$ and $h$. Using $k = t/\sqrt{h}$, we have:
\begin{equation} \label{eq:coeff_expansion}
    \frac{k-1}{k+2} = \frac{k+2-3}{k+2} = 1 - \frac{3}{k+2} \approx 1 - \frac{3}{k} = 1 - \frac{3\sqrt{h}}{t}.
\end{equation}
Note that higher-order terms of $1/k$ correspond to powers of $\sqrt{h}$ greater than 1, which become negligible in the limit.

\noindent \textbf{Step 4: Gradient Term Approximation} \\
Since $y^k = x^k + \beta_k(x^k - x^{k-1}) \approx x(t) + \mathcal{O}(\sqrt{h})$, by continuity of $\nabla f$, we have $\nabla f(y^k) = \nabla f(x(t)) + \mathcal{O}(\sqrt{h})$.
Multiplying by $h$ in the update rule makes the error term $\mathcal{O}(h^{3/2})$, which is negligible. Thus:
\begin{equation} \label{eq:grad_approx}
    h \nabla f(y^k) \approx h \nabla f(x(t)).
\end{equation}

\noindent \textbf{Step 5: Substituting and Taking the Limit} \\
Substituting \eqref{eq:LHS}, \eqref{eq:momentum_term}, \eqref{eq:coeff_expansion}, and \eqref{eq:grad_approx} into the original discrete equation \eqref{eq:discrete_nesterov}:

\begin{equation*}
    \left( \sqrt{h}\dot{x} + \frac{h}{2}\ddot{x} \right) = \left( 1 - \frac{3\sqrt{h}}{t} \right) \left( \sqrt{h}\dot{x} - \frac{h}{2}\ddot{x} \right) - h \nabla f(x).
\end{equation*}

\noindent Expanding the right-hand side (ignoring terms of order $\mathcal{O}(h^{3/2})$ and higher):
\begin{align*}
    \sqrt{h}\dot{x} + \frac{h}{2}\ddot{x} &= \left( \sqrt{h}\dot{x} - \frac{h}{2}\ddot{x} \right) - \frac{3\sqrt{h}}{t}\left( \sqrt{h}\dot{x} \right) - h \nabla f(x) \\
    \sqrt{h}\dot{x} + \frac{h}{2}\ddot{x} &= \sqrt{h}\dot{x} - \frac{h}{2}\ddot{x} - \frac{3h}{t}\dot{x} - h \nabla f(x).
\end{align*}

\noindent Subtracting $\sqrt{h}\dot{x}$ from both sides and collecting the $\ddot{x}$ terms:
\begin{equation*}
    \frac{h}{2}\ddot{x} + \frac{h}{2}\ddot{x} = - \frac{3h}{t}\dot{x} - h \nabla f(x).
\end{equation*}

\noindent Simplifying:
\begin{equation*}
    h \ddot{x}(t) + \frac{3h}{t}\dot{x}(t) + h \nabla f(x(t)) = 0.
\end{equation*}

\noindent Finally, dividing by $h$ (assuming $h > 0$), we obtain the second-order ODE:
\begin{equation} \label{eq:final_ode}
    \ddot{x}(t) + \frac{3}{t}\dot{x}(t) + \nabla f(x(t)) = 0.
\end{equation}
This completes the derivation.

\end{document}